\numberwithin{equation}{section}
\newcommand{\R}{\mathbb{R}}
\renewcommand{\S}{\mathbb S}
\newcommand{\M}{\mathcal M}
\newtheorem{theorem}{Theorem}[section]
\newtheorem{lemma}{Lemma}[section]
\newtheorem{corollary}{Corollary}[section]
\newtheorem{proposition}{Proposition}[section]
\newtheorem{remark}{Remark}[section]    
\numberwithin{equation}{section}
\newcommand\vep{\eps }
\newcommand\eps{\varepsilon}
\newcommand{\p}{\partial}
\newcommand{\Om}{\Omega}
\newcommand{\beq}{\begin{equation}}
\newcommand{\eeq}{\end{equation}}
\begin{document}

\title[$p$-Gauss curvature flow]
{Regularity of \\
the $p$-Gauss curvature flow with flat side}

\author[G. Huang, X.-J. Wang and Y. Zhou]
{Genggeng Huang, Xu-Jia Wang and Yang Zhou}

\address[Genggeng Huang]
{School of Mathematical Sciences, Fudan University, Shanghai 200433, China.}
\email{genggenghuang@fudan.edu.cn}

\address[Xu-Jia Wang]
{Mathematical Sciences Institute, The Australian National University, Canberra, ACT 2601, Australia}
\email{Xu-Jia.Wang@anu.edu.au}

\address[Yang Zhou]
{Mathematical Sciences Institute, The Australian National University, Canberra, ACT 2601, Australia}
\email{yang.zhou18798@outlook.com}

\thanks{The first author was supported by NNSFC 12141105,
the second author was supported by ARC DP200101084 and DP230100499.}

\subjclass[2020]{53E40, 35K96, 35R35, 35B65.}

\keywords{Gauss curvature flow, Parabolic Monge-Amp\`ere equation, interface, regularity.}
\date{}
\maketitle

\begin{abstract}
We study the regularity of the $p$-Gauss curvature flow with flat side.
In our previous paper \cite{HWZ}, we obtained the regularity of the interface,
namely the boundary of the flat part.
In this paper, we study the regularity of the convex hypersurface near the interface.
\end{abstract}

\baselineskip15.2pt
\parskip5pt

\vspace{3mm}
\section{Introduction}\label{intro}
\vspace{3mm}

Let  $\mathcal M_0$ be a closed convex hypersurface in $\R^{n+1}$, parametrized by
$X_0(\omega)$,  $\omega\in \mathbb{S}^n$.
In this paper we study the Gauss curvature flow with power $p>0$,  
\begin{equation}\label{GCF-p}
\begin{split}
\frac{\partial X}{\partial t}(\omega,t)&=-K^p(\omega,t)\gamma(\omega,t),\\
X(\omega,0)&=X_0(\omega),
\end{split}
\end{equation}
where $K$ is the Gauss  curvature of $\mathcal M_t=X(\omega, t)$,
$\gamma$ is the outer unit normal of $\mathcal M_t$ at $X(\omega, t)$.

The Gauss curvature flow has been extensively studied
if the initial hypersurface $\mathcal M_0$ is strictly convex \cite{A1999, ACGL, BCD2017,C1985, T1985}.
Here we are concerned with the regularity in the case when the initial hypersurface $\M_0$ contains a flat side,
a question first studied by Hamilton \cite{H1993}.
In this case, the solution will become strictly convex instantly when $t>0$ if $p\le \frac 1n$  \cite{A2000},
but the flat side will persist for a while before $\M_t$ becomes strictly convex if $p> \frac 1n$
\cite{H1993, A2000}.
In the latter case,
the local $C^\infty$ regularity of the strictly convex part of $\M_t$ was proved in \cite{T1985, C1985},
and the $C^{1,\alpha}$ regularity across the interface $\Gamma_t$ were obtained in \cite{DS2009},
where $\Gamma_t$ denotes the boundary of the flat side $F_t\subset\M_t$.

When $p=1$, the regularity of $\Gamma_t$ was obtained in \cite{DH1999} for small time $t>0$,
and the long time regularity of $\Gamma_t$ in the case $n=2$ was obtained in \cite{DL2004},
under certain non-degenerate conditions on the initial hypersurface $\mathcal M_0$.
The results in \cite{DL2004} were extended to $p\in (1/2,1]$ in \cite{KLR2013} when $n=2$.
For general $n\ge 2$ and $p>\frac 1n$, the long time regularity of  $\Gamma_t$
was recently obtained by the authors \cite{HWZ}.
We proved that the interface $\Gamma_t$ is smooth until it disappears.

In this paper,
we study the regularity of the strictly convex part of $\M_t$ near the interface $\Gamma_t$.
The regularity of $\mathcal M_t$ near $\Gamma_t$ does not follow directly from the regularity
theory of parabolic equations,  as the flow \eqref{GCF-p} is strongly degenerate near $\Gamma_t$,
even though the regularity of $\Gamma_t$ has been obtained \cite{HWZ}.
For simplicity we assume that $\M_0$ has only one flat part.
Choosing the coordinates properly, we may assume that $\M_t\subset\{y_{n+1}\ge 0\}$ and
the flat side lies on the plane $\{y_{n+1}=0\}$.
Then, locally $\M_t$  can be represented as the graph of a nonnegative function $v$,
\begin{equation*}
y_{n+1}=v(y_1, \cdots, y_n,t)
\end{equation*}
over a bounded domain $\Omega_t$, such that $\Gamma_t$ is strictly contained in $\Omega_t$,
$|Dv|\to\infty$ near $\p\Om_t$, and $v$ satisfies the equation
\begin{equation}\label{v}
v_t(y, t)=\frac{(\det D^2 v(y, t))^p}{(1+|D v|^2)^{\frac {(n+2)p-1}{2}}}\quad y\in\Om_t, \ t>0.
\end{equation}
By the $C^{1,\alpha}$ regularity \cite{DS2009}, we have
$|Dv(y,t)|\to 0$ as $y\to  \Gamma_t$.

For the short time smoothness of the interface $\Gamma_t$,
it is necessary to assume  certain non-degeneracy conditions
on the initial hypersurface $\M_0$ \cite{DH1999,DL2004, D2014}.
Denote
\beq\label{def-g}
g=\big(\text{\Small$\frac{\sigma_p+1}{\sigma_p}$} v\big)^{\frac{\sigma_p}{\sigma_p+1}},\ \ \ \sigma_p=n- \text{\Small$\frac1p$}.
\eeq
The following non-degeneracy conditions were introduced in \cite{DH1999,DL2004, D2014}.

\begin{itemize}
\item[(I1)] The level set $\{v(y, 0) = \eps \}$ is uniformly  convex for $\eps\ge 0$ small,
i.e., its principal curvatures have positive upper and lower bounds.
\item[(I2)] There exists a constant $\lambda_0\in (0, 1)$
such that $\lambda_0\le |Dg(y,0)|\le \lambda_0^{-1} \ \text{on} \ \Gamma_0$.
\end{itemize}

\noindent
Note that condition (I2) implies that  $v(y,0)\approx \text{dist}(y, \Gamma_0)^{(\sigma_p+1)/\sigma_p}$.
We also assume

\begin{itemize}
\item[(I3)]  $\M_0$ is locally uniformly convex and smooth away from the flat region, and
$g(y,0)\in C_\mu^{2+\alpha}(\overline{\{v>0\}})$,  where $C_\mu^{2+\alpha}$ will be introduce in \eqref{1.15} below.
\end{itemize}

We have the following regularity for the function $g$ near the interface $\Gamma_t$.

\begin{theorem}\label{thm-g}
Assume conditions {\rm (I1)-(I3)}.
Then if $p>\frac 1n$, we have $g(\cdot, t)\in C_\mu^{2+\beta}(\overline{\{v>0\}})$ on $0< t <T^*$ for some $\beta\in(0,1)$.
Moreover,
\begin{itemize}
\item[$(1)$] if $\frac{2}{\sigma_p}\in \mathbb Z^+$,
$g$ is  $C^\infty$-smooth up to   $\Gamma_t$ for $0< t <T^*$;

\item[$(2)$] if $\frac{2}{\sigma_p}\notin \mathbb Z^+$,
$g\in C_\mu^{[\frac{2}{\sigma_p}], 2+\beta_0}(\overline{\{g>0\}})$ for $0< t <T^*$,
where $\beta_0 = \min\{1, \frac{4}{\sigma_p} - 2\big[\frac{2}{\sigma_p}\big] \}$.
\end{itemize}
\end{theorem}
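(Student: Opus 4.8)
The plan is to pass to the degenerate variable $g$ of \eqref{def-g}, show that $g$ solves a quasilinear parabolic equation whose only degeneracy at the interface is a mild, purely normal one of Daskalopoulos--Hamilton type, and then propagate $C^{2+\beta}_\mu$ regularity in time by weighted Schauder estimates and a continuity argument. First I would derive the equation for $g$. Writing $v=\tfrac{\sigma_p}{\sigma_p+1}g^{(\sigma_p+1)/\sigma_p}$, one computes $D^2 v=g^{1/\sigma_p}\bigl(D^2 g+\tfrac1{\sigma_p g}Dg\otimes Dg\bigr)$, so by the matrix determinant lemma $\det D^2 v=g^{n/\sigma_p-1}\Phi$ with
\[
\Phi:=g\,\det D^2 g+\tfrac1{\sigma_p}\bigl(\mathrm{cof}(D^2 g)\,Dg\bigr)\cdot Dg .
\]
Since $v_t=g^{1/\sigma_p}g_t$ and $p\sigma_p=pn-1$, all powers of $g$ cancel and \eqref{v} becomes
\[
g_t=\frac{\Phi(g,Dg,D^2 g)^{p}}{\bigl(1+g^{2/\sigma_p}|Dg|^2\bigr)^{\frac{(n+2)p-1}{2}}},\qquad y\in\Om_t,\ t>0 .
\]
By the $C^{1,\alpha}$ estimate of \cite{DS2009}, $g$ and $Dg$ are H\"older continuous up to $\Gamma_t$; by the a priori bounds of \cite{HWZ} (smoothness of $\Gamma_t$ and the propagation of condition (I2)), $\lambda_0\le|Dg|\le\lambda_0^{-1}$ near $\Gamma_t$; and by (I1) the tangential Hessian $(g_{ij})_{i,j<n}$ is uniformly positive definite near $\Gamma_t$. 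In particular $\Phi>0$ and $g_t$ is bounded between positive constants near $\Gamma_t$.

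\textbf{Linearization and weighted Schauder estimates.} Fixing $t_0<T^*$ and using the smoothness of $\Gamma_{t_0}$ from \cite{HWZ}, I would straighten $\Gamma_{t_0}$ to $\{x_n=0\}$ near a boundary point, with $x_n\sim\mathrm{dist}(\cdot,\Gamma_t)$, so that $g/x_n$ is bounded above and below by (I2). Linearizing the $g$-equation about an approximate solution $\bar g\approx x_n\phi$: since $\det$ is affine in each entry of $D^2 g$, its second derivative in the entry $g_{nn}$ vanishes, so the variation of $\mathrm{cof}(D^2 g)$ contributes no $\partial^2_{nn}$ term; the sole $\partial^2_{nn}$ term comes from $g\,\det D^2 g$, with coefficient $\approx x_n\,\det(g_{ij})_{i,j<n}>0$. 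The mixed coefficients of $\partial^2_{in}$ are likewise $O(x_n)$, while the coefficients of the tangential $\partial^2_{ij}$ ($i,j<n$) are bounded below, their non-degenerate part being a positive multiple of the cofactor matrix of the tangential Hessian, positive definite by (I1). Hence the linearization has the form
\[
\mathcal L u=\partial_t u-x_n a_{nn}\,\partial^2_{nn}u-x_n a_{in}\,\partial^2_{in}u-a_{ij}\,\partial^2_{ij}u-b_i\,\partial_i u-c\,u
\]
(summation over $i,j<n$ in the $a_{ij}$ term), which is strictly parabolic with respect to the singular metric $\tfrac{dx_n^2}{x_n}+|dx'|^2$ and the parabolic scaling $t\sim x_n$. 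This is exactly the degenerate class for which the Daskalopoulos--Hamilton theory yields Schauder estimates in the weighted parabolic H\"older spaces $C^{2+\beta}_\mu$; proving the version of those estimates adapted to \eqref{v}, with the nonlinearity and lower-order terms controlled, is the technical core.

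\textbf{Continuity argument and the $C^\infty$ case.} With $g(\cdot,0)\in C^{2+\alpha}_\mu$ by (I3), the bounds of \cite{HWZ} ($\Gamma_t$ smooth, $|Dg|\sim1$, level sets uniformly convex for $0<t<T^*$) feed the above Schauder estimates into a standard short-time fixed-point/continuity scheme, iterated up to $T^*$; this gives $g(\cdot,t)\in C^{2+\beta}_\mu(\overline{\{v>0\}})$ on $0<t<T^*$, the first assertion. For part (1), the only source of boundary roughness in the $g$-equation is the factor $g^{2/\sigma_p}$ in the denominator --- equivalently, the admissible expansion exponents at $\Gamma_t$ are generated by $1$ and $2/\sigma_p$: by successive substitution one builds near $\Gamma_t$ a formal expansion $g=x_n\phi_0+x_n^{1+2/\sigma_p}\phi_1+x_n^{1+4/\sigma_p}\phi_2+\cdots$ with $\phi_k$ smooth in $(x',t)$. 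If $\tfrac2{\sigma_p}\in\mathbb Z^+$, all exponents are positive integers, so this is a genuine Taylor expansion; the remainder at each stage solves an $\mathcal L$-equation with smooth right-hand side, and iterating the Schauder estimates gives $g\in C^\infty$ up to $\Gamma_t$.

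\textbf{Optimal regularity and the main obstacle.} If $\tfrac2{\sigma_p}\notin\mathbb Z^+$, one can still absorb $[\tfrac2{\sigma_p}]$ normal derivatives of $g$ into the smooth part of the expansion, but the next term, of order $x_n^{1+2/\sigma_p}$ modulo smooth terms (together with the following one at spacing $2/\sigma_p$), obstructs further smoothness; keeping track of these two gap terms yields $g\in C^{[2/\sigma_p],\,2+\beta_0}_\mu(\overline{\{g>0\}})$ with $\beta_0=\min\{1,\tfrac4{\sigma_p}-2[\tfrac2{\sigma_p}]\}$. The main obstacle throughout is the degenerate Schauder theory of the second step: establishing that the degeneracy of the $g$-equation is purely normal (which relies essentially on (I1)) and obtaining sharp weighted estimates for $\mathcal L$ up to the free boundary; the fine expansion at $\Gamma_t$ that pins down $\beta_0$ in the non-integral case is the remaining delicate point.
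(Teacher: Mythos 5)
Your skeleton (pass to $g$, note the degenerate structure, use weighted $C^{2+\beta}_\mu$ Schauder theory, then expand in the non-smooth power $g^{2/\sigma_p}$ to get the optimal exponent) points in the right direction, but the two steps that constitute the actual content of the theorem are asserted rather than proved. First, the non-degeneracy of the weighted Hessian near $\Gamma_t$ is not a consequence of (I1): (I1) is a hypothesis on $\mathcal M_0$ only, and even granting that \cite{HWZ} propagates convexity of the level sets, you still need two-sided bounds on the degenerate Hessian of $g$ (in the paper, $G\approx I_{n\times n}$ and $\widetilde H\approx I_{n\times n}$, i.e. $gg_{\xi^{(n)}\xi^{(n)}}+\frac1{\sigma_p}|Dg|^2\approx 1$, $\sqrt g\, g_{\xi^{(i)}\xi^{(n)}}\lesssim1$, tangential $g_{\xi^{(i)}\xi^{(j)}}$ bounded and with determinant bounded below). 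This is Step~2 of Lemma \ref{Lemma-3.1} and requires the Legendre-transform estimates of Lemma \ref{lemma-u} ($u_{rr}\approx r^{\sigma_p-1}$, $u_{\xi\xi}\approx r^{-1}$) together with a quantitative alignment argument between $Dg/|Dg|$ and the eigenvector of the smallest eigenvalue of $D^2u$; nothing in your proposal supplies this. (Incidentally, the natural weight on the mixed terms is $\sqrt{x_n}$, as in the operator \eqref{l++}, not $x_n$ as you wrote.) Second, and more seriously, you cannot ``apply Daskalopoulos--Hamilton Schauder estimates'' to the linearization and close with a short-time fixed-point scheme: the Schauder estimate (Lemma \ref{Lemma-2.4}) needs coefficients in $C^{\alpha}_\mu$, i.e. H\"older continuity of the second derivatives of $g$ (or $h$) up to the interface, which is precisely what is to be proved; and the usual substitute (Krylov--Safonov plus Evans--Krylov) is unavailable because the equation \eqref{g-eq}/\eqref{h-eq2} is fully nonlinear, degenerate, \emph{not} concave, and carries the merely H\"older coefficient $y_{n+1}^{2/\sigma_p}$ — the paper says explicitly that the standard theories do not apply. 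The paper's mechanism is: the $C^{2+\alpha_0}$ regularity of $\zeta$ (Lemma \ref{lemma-zeta}, from \cite{HWZ}) gives the refined decay $gg_{\xi^{(n)}\xi^{(n)}}\lesssim r^{\alpha_0\sigma_p/2}$ and hence $h_{n+1}\in C^\beta_\mu$; then the degenerate H\"older estimate of Lemma \ref{Lemma-2.3} applied to $h_i,h_t$ (via the differentiated equation, using $\hat b\gtrsim1$) gives $C^\beta_\mu$ for the remaining first derivatives; only then can the Schauder machine start. Your proposal labels this ``the technical core'' but offers no route to it, and the global-in-time conclusion comes from the uniform a priori estimates of \cite{HWZ} rather than from iterating a short-time scheme, whose time steps you have no control over.

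The higher-regularity part is likewise a heuristic rather than a proof. The paper obtains $C^\infty$ regularity in $(y',t)$ by repeatedly differentiating the hodograph equation \eqref{h-eq2} tangentially and in time and invoking Lemma \ref{Lemma-2.4}, and then isolates the normal direction: after setting $z_n=2\sqrt{y_{n+1}}$ it treats \eqref{hnn-f} (resp.\ \eqref{Vnn-f} after $k_0=[\frac2{\sigma_p}]$ normal differentiations) as an ODE in $z_n$, integrates against the weight $\rho^{b_0}$, and extracts the exact modulus $\min\{1,\frac4{\sigma_p}-2k_0\}$ for $h_{z_nz_n}$; this is where $\beta_0$ comes from. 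Your formal expansion $g=x_n\phi_0+x_n^{1+2/\sigma_p}\phi_1+\cdots$ is consistent with that exponent, but you neither construct the expansion nor control the remainders, and in the integer case you still need to justify differentiating the degenerate equation in the normal variable with the linearized operators keeping the structure required by Lemma \ref{Lemma-2.4}. As it stands the proposal identifies the right difficulties but does not resolve them.
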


We remark that the regularity for $g$ in Theorem \ref{thm-g} is optimal,
due to the term $g(y,t)^{\frac{2}{\sigma_p}} \approx \text{dist}(y,\Gamma_t)^{\frac{2}{\sigma_p}}$
in the equation \eqref{g-eq}.
As usual we use $[a]$ to denote the greatest integer less than $a$.

From Theorem \ref{thm-g}, it follows the regularity of the height function $v$.

\begin{corollary}\label{cor-v}
Assume conditions {\rm (I1)-(I3)} and assume $p>\frac 1n$.
\begin{itemize}
\item[$(1)$] If $\frac{1}{\sigma_p}\in \mathbb Z^+$,
$v$ is  $C^\infty$-smooth up to   $\Gamma_t$ for $0< t <T^*$;

\item[$(2)$] if $\frac{1}{\sigma_p}\notin \mathbb Z^+$,
$v \in C^{1+\big[\frac{1}{\sigma_p}\big], \frac{1}{\sigma_p} - \big[\frac{1}{\sigma_p}\big]}(\overline{\{v>0\}})$ for $0< t <T^*$.
\end{itemize}

Moreover, we have the following intermediate estimate,

\begin{equation}\label{v-inter}
\sup_{t\in[\sigma,T]}\sup_{y,\tilde y\in\overline{\{0<v(\cdot,t)<1\}}} d_{y,\tilde y}(t)^{1+\frac{1}{\sigma_p}} \frac{ |D_y^{k_0+2} v(y,t) - D_{ y}^{k_0+2} v(\tilde y, t) | }{|y-\tilde y|^{\frac{2}{\sigma_p}-k_0}} \le C,
\end{equation}
for all $0<\sigma<T<T^*$,
where $d_{y,\tilde y}(t) := \min \{\text{dist}(y,\Gamma_t), \text{dist}(\tilde y,\Gamma_t)\}$, $k_0$ is the greatest integer strictly less than $\frac{2}{\sigma_p}$ and $C$ is a positive constant depending only on $\mathcal M_0, n, p, \sigma, T$.
\end{corollary}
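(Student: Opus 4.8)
The plan is to exploit the algebraic relation coming from \eqref{def-g}, namely
$v=\tfrac{\sigma_p}{\sigma_p+1}\,g^{\,q}$ with $q:=1+\tfrac1{\sigma_p}$, so that the behaviour of $v$ near $\Gamma_t$ is governed by that of the composition $s\mapsto s^{q}$ with the function $g$, whose regularity is supplied by Theorem \ref{thm-g}. I would split $\overline{\{v>0\}}$ into the region away from $\Gamma_t$ and a fixed collar of $\Gamma_t$, and treat the collar by the chain rule. Away from $\Gamma_t$ one has $v>0$, so \eqref{v} is uniformly parabolic there; by (I3) and interior parabolic Schauder theory $v(\cdot,t)$ is $C^\infty$ on $\overline{\{v>0\}}\setminus\Gamma_t$, and \eqref{v-inter} is trivial since $d_{y,\tilde y}(t)$ is bounded below there. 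Thus everything reduces to a neighbourhood of $\Gamma_t$.

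In that neighbourhood I would first record that condition (I2), propagated in time (as in \cite{HWZ} and in the course of proving Theorem \ref{thm-g}), gives $\lambda\le|Dg(\cdot,t)|\le\lambda^{-1}$ near $\Gamma_t$ for each $t\in(0,T^*)$; hence $g(\cdot,t)\asymp\text{dist}(\cdot,\Gamma_t)$, the interface $\Gamma_t=\{g=0\}$ is a smooth hypersurface, and $g$ is a boundary-defining function lying in the class given by Theorem \ref{thm-g}. In particular $v\asymp\text{dist}(\cdot,\Gamma_t)^{q}$, which already shows the claimed regularity is the best possible. Next I would expand $D^{k}v$ by the Fa\`a di Bruno formula applied to $v=c\,g^{q}$: every term is a product $g^{\,q-j}\cdot(\text{derivatives of }g\text{ of total order }k)$ with $1\le j\le k$, the most singular being $g^{\,q-k}(Dg)^{k}$. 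If $\tfrac1{\sigma_p}\in\mathbb Z^{+}$ then $q\in\mathbb Z^{+}$ and $\tfrac2{\sigma_p}\in\mathbb Z^{+}$, so Theorem \ref{thm-g}(1) gives $g\in C^{\infty}$ up to $\Gamma_t$, and $v=c\,g^{q}$ is a polynomial in $g$, hence $C^{\infty}$ up to $\Gamma_t$; this proves (1). If $\tfrac1{\sigma_p}\notin\mathbb Z^{+}$, set $k=[q]=1+[\tfrac1{\sigma_p}]$. Using $g\asymp\text{dist}(\cdot,\Gamma_t)$ and the (weighted) derivative bounds for $g$ from Theorem \ref{thm-g} — $g\in C^{2+\beta}_\mu$ already suffices when $[q]\le 2$, while $g\in C^{[2/\sigma_p],2+\beta_0}_\mu$ (or $g\in C^\infty$ up to $\Gamma_t$, when $\tfrac2{\sigma_p}\in\mathbb Z^+$) supplies the extra derivatives when $[q]>2$, as $[2/\sigma_p]+2>[1/\sigma_p]+1$ — every term with $j<k$ carries a strictly positive power of $g$ and is bounded with room to spare, whereas the borderline term $g^{\,q-k}(Dg)^{k}$ has $g^{\,q-k}\asymp\text{dist}(\cdot,\Gamma_t)^{\,1/\sigma_p-[1/\sigma_p]}\in C^{0,\,1/\sigma_p-[1/\sigma_p]}$. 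Collecting these gives $D^{[q]}v\in C^{0,\,1/\sigma_p-[1/\sigma_p]}$ up to $\Gamma_t$, which is (2).

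For the intermediate estimate \eqref{v-inter} I would argue by rescaling near $\Gamma_t$. For $y_{0}$ near $\Gamma_t$ with $\rho:=\text{dist}(y_{0},\Gamma_t)$ small, the function $z\mapsto\rho^{-q}v(y_{0}+\rho z,t)$ is comparable to $1$ on the unit ball; transporting the weighted estimate of Theorem \ref{thm-g} for $g$ through $v=c\,g^{q}$, its $C^{k_{0}+2,\,2/\sigma_p-k_{0}}$ norm in $z$ is bounded uniformly in $y_{0}$ and in $t\in[\sigma,T]$. Undoing the scaling (the $(k_{0}+2)$-th derivative scales by $\rho^{\,q-k_{0}-2}$ and the extra Hölder factor by $\rho^{\,2/\sigma_p-k_{0}}$, for a net $\rho^{\,q-2}=\rho^{\,1/\sigma_p-1}$ once one also accounts for the missing power relative to $v$) reproduces exactly the weight $d_{y,\tilde y}(t)^{1+1/\sigma_p}$ in front of the $C^{2/\sigma_p-k_{0}}$ quotient of $D^{k_{0}+2}v$ in \eqref{v-inter}; away from $\Gamma_t$ this is just the interior Schauder bound for \eqref{v}.

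The main obstacle is the regime $[q]>2$ in the middle step: one must verify, term by term in the Fa\`a di Bruno expansion of $D^{[q]}(g^{q})$, that the controlled blow-up of the higher derivatives of $g$ encoded in $C^{[2/\sigma_p],2+\beta_0}_\mu$ is exactly absorbed by the accompanying power of $g\asymp\text{dist}(\cdot,\Gamma_t)$, so that only the single term $g^{\,q-[q]}(Dg)^{[q]}$ produces a genuine — and optimal — Hölder exponent $1/\sigma_p-[1/\sigma_p]$, all other terms being strictly better. This is precisely the bookkeeping that converts the exponent $2/\sigma_p$ governing the regularity of $g$ into the exponent $1/\sigma_p$ governing $v$.
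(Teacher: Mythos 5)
Your treatment of parts (1) and (2) is essentially the paper's own argument: both exploit $v=\frac{\sigma_p}{\sigma_p+1}g^{1+\frac1{\sigma_p}}$, the comparability $g\approx\mathrm{dist}(\cdot,\Gamma_t)$, and the regularity of $g$ from Theorem \ref{thm-g}, with the same identification of the borderline term (the paper organizes the bookkeeping by differentiating $D_yv=g^{\frac1{\sigma_p}}D_yg$ $l_0=[\frac1{\sigma_p}]$ times rather than via Fa\`a di Bruno, but that is cosmetic). For the intermediate estimate \eqref{v-inter} you take a genuinely different route: a scaling argument at scale $\rho=\mathrm{dist}(y_0,\Gamma_t)$, whereas the paper works directly with the weighted quantity $g^{k_0+1-\frac1{\sigma_p}}D_y^{k_0+2}v\in C_\mu^{0,\frac2{\sigma_p}-k_0}$ (obtained from \eqref{v-g-2} and Theorem \ref{thm-g}) and then splits into the cases $|y-\tilde y|\ge d_{y,\tilde y}$ and $|y-\tilde y|\le d_{y,\tilde y}$. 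Your scaling route can be made to work and is conceptually clean, but the work it hides is the same as the paper's: you must convert the anisotropic $C_\mu$ bounds on $g$ (H\"older exponent $\beta_0$ in the metric $\mu$, i.e.\ modulus $(|y-\tilde y|/\sqrt\rho)^{\beta_0}$ at height $\rho$) into a uniform $C^{k_0+2,\frac2{\sigma_p}-k_0}$ bound for the rescaled function, and you still need the pointwise bound $|D^{k_0+2}v|\lesssim \mathrm{dist}^{\frac1{\sigma_p}-k_0-1}$ together with the case split in $|y-\tilde y|$ versus $d_{y,\tilde y}$ to pass from balls of radius $\sim\rho$ to arbitrary pairs in $\overline{\{0<v<1\}}$; ``interior Schauder away from $\Gamma_t$'' does not cover the regime $|y-\tilde y|\gg d_{y,\tilde y}$ with both points close to $\Gamma_t$.

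Two steps as written would not go through and need the paper's sharper input. First, ``$g\in C_\mu^{2+\beta}$ already suffices when $[q]\le 2$'' is false for an unspecified $\beta$: for $[q]=1$ you need $D_yg\in C^{0,\frac1{\sigma_p}}$, which requires the optimal exponent $\beta_0=\min\{1,\frac4{\sigma_p}\}$ of Theorem \ref{thm-g}(2) together with the check $\frac1{\sigma_p}\le\frac{\beta_0}2$ (this is exactly what the paper verifies); and for $[q]=2$, i.e.\ $\frac1{\sigma_p}\in(1,2)$, $C_\mu^{2+\beta}$ gives no control of $D^2g$ at all, and one must invoke $g\in C_\mu^{[\frac2{\sigma_p}],2+\beta_0}$ with $[\frac2{\sigma_p}]\ge 2$ to get $D^2g$ Lipschitz. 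Second, in your Fa\`a di Bruno bookkeeping you attribute the H\"older exponent entirely to the factor $g^{q-[q]}$ and ask only boundedness of the remaining factors; but a product of a $C^{0,\alpha}$ function with a merely bounded function need not be H\"older, so you must also record that $Dg,\dots,D^{[q]}g$ are Lipschitz (or at least $C^{0,\frac1{\sigma_p}-[\frac1{\sigma_p}]}$), which is precisely the paper's counting $l_0+1\le 2l_0\le[\frac2{\sigma_p}]$. These are repairable with the statements of Theorem \ref{thm-g} you already cite, but as written they are gaps.
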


For intermediate estimate to uniformly elliptic and parabolic equations,
we refer the readers to Chapter IV  \cite{L1996}.

To prove Theorem \ref{thm-g}, we introduce the Hodograph transformation $h$, given in \eqref{gtoh},
which satisfies the evolution equation
\begin{equation}\label{1-h-eq2}
h_t=\frac{(\det \widetilde H)^p}{\big( h_{n+1}^2 + y_{n+1}^{\frac2{\sigma_p}}(1+ h_1^2 +\cdots + h_{n-1}^2) \big) ^{\frac {(n+2)p-1}{2}}}  \ \  \text{in} \ \   \{y_{n+1}>0\},
\end{equation}
where the matrix $\widetilde H$ is given in \eqref{tH-1}.
Note that equation \eqref{1-h-eq2} is a degenerate fully nonlinear parabolic equation
without the concavity condition, and the coefficient $y_{n+1}^{\frac2{\sigma_p}}$ is only H\"older continuous when $p>\frac {1}{n-2}$.
Hence the regularity theories, such as  \cite{JW2014,L1996}, do not apply.
Here we make use of some estimates in \cite{HTW}.

The paper is organized as follows.
In Section \ref{s2}, we recall the short time regularity and some basic estimates.
We then derive the equations for $g,h$ and prove the regularity (Theorem \ref{thm-g}) in Section \ref{s7}.

\vspace{2mm}

\noindent{\bf Notation.}
Given two positive quantities $a$ and $b$, we denote
$ a\lesssim b$
if there is a constant $C>0$, depending only on $\mathcal M_0, n, p, T$,
such that $a \le C b$, where $T\in (0, T^*)$ is any given constant.
We also denote
$a\approx b$
if  $a\lesssim b$ and $b\lesssim a$.

Let $k\ge 0$ be an integer and  $\alpha\in (0, 1]$.
Let $\Omega$ be a domain in $\mathbb R^n$.
As usual, we define the norm $\|\cdot\|_{C^{k,\alpha}(\overline \Omega)}$ by
\beq \label{norm1}
\| U\|_{C^{k,\alpha}(\overline \Omega)}=\sup_{|\gamma|\le k}|D^\gamma U(x)|
  + \sup_{|\gamma| = k \atop x,y\in\Omega}\frac{|D^\gamma U(x)-D^\gamma U(y)|}{|x-y|^\alpha}.
\eeq
In the parabolic case, we denote
\beq \label{norm2}
\| U\|_{C^{k+\alpha, \frac{k+\alpha}{2}}_{x,t}(\overline Q)}
  =\sup_{|\gamma|+2s\le k \atop (x,t)\in Q}|D_x^\gamma D^s_t U(x,t)| + \sup_{|\gamma|+2s
  = k \atop (x,t), (y,t')\in Q}\frac{|D_x^\gamma D^s_t U(x,t)-D_x^\gamma D^s_t U(y,t')|}{(|x-y|^2 +|t-t'|)^{\alpha/2}},
\eeq
where $Q$ is a domain in $\R^n\times\R^1$.
If $\alpha\in(0,1)$, we will write $\|\cdot \|_{C^{k+\alpha, \frac{k+\alpha}{2}}_{x,t}(\overline Q)}$ as $\|\cdot \|_{C^{k+\alpha}(\overline Q)}$ for brevity.

To study the regularity of $g$,
we introduce H\"older spaces
with respect to the metric $\mu$ in $ \R^{n-1}\times \mathbb R^+ \times \R$ as in \cite{DH1999, DL2004},
\begin{equation*}
\mu[(x, t), (y, s)] = |x'-y'| + |\sqrt{x_{n}} - \sqrt{y_{n}}| + \sqrt{|t-s|}.
\end{equation*}
Let $Q$  be a domain in $\mathbb R^{n,+} \times \R$,  where $\R^{n, +} :=\R^{n-1}\times \R^+=\{x\in\R^n\ |\  x_n>0\}$.
We denote
\begin{equation}
\|U\|_{C_\mu^{0,\alpha}({\overline Q})}
                = \sup_{p\in {Q}} |U(p) |+  \sup_{p_1,p_2\in{Q}} \frac{|U(p_1)-U(p_2)|}{\mu[p_1,p_2]^\alpha},
\end{equation}
\begin{equation}\label{1.15}
\begin{split}
\|U\|_{C_\mu^{2+\alpha}({\overline Q})}
  & = \|x_n U_{nn}\|_{C_\mu^{0,\alpha}({\overline Q})}
         + {\Small\text{$\sum_{i=1}^{n-1}$}} \|\sqrt{x_n}U_{ni}\|_{C_\mu^{0,\alpha}({\overline Q})}
        + {\Small\text{$\sum_{i,j=1}^{n-1}$}} \|U_{ij}\|_{C_\mu^{0,\alpha}({\overline Q})} \\
& \quad   + {\Small\text{$\sum_{i=1}^{n}$}} \|U_{i}\|_{C_\mu^{0,\alpha}({\overline Q})}
               +  \|U_{t}\|_{C_\mu^{0,\alpha}({\overline Q})}+ \|U\|_{C_\mu^{0,\alpha}({\overline Q})} ,
\end{split}
\end{equation}
and
\begin{equation}\label{1.16}
\|U\|_{C_\mu^{m,2+\alpha}({\overline Q})}
            =\sum_{|\gamma|+2s\le m}\|D^\gamma_xD^s_t U\|_{C_\mu^{2+\alpha}({\overline Q})}.
\end{equation}

\vspace{3mm}
\section{Some estimates}\label{s2}
\vspace{3mm}

First we recall the short time existence and regularity in \cite{DH1999},
where Daskalopoulos and Hamilton proved the following.

\begin{proposition}[Theorem 9.1,\cite{DH1999}]\label{DH1999}
Assume the conditions {\rm (I1)-(I3)}.
Then, there exists a time $T_0>0$
such that \eqref{GCF-p} admits a solution $\mathcal M_t$ for $0< t\leq T_0$, and
at any given time $t\in (0, T_0]$, $\M_{t}$ satisfies the conditions {\rm (I1)-(I3)}.
\end{proposition}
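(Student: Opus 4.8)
This is Theorem~9.1 of \cite{DH1999}, so here I only indicate the strategy, phrased for general $p>\frac1n$. The plan is to trade the strongly degenerate equation \eqref{v} for a nondegenerate one. Passing from $v$ to the function $g$ of \eqref{def-g} absorbs the factor $v^{p}$ that makes \eqref{v} degenerate along $\Gamma_0$; moreover, by (I2) the function $g(\cdot,0)$ vanishes linearly along $\Gamma_0$ with $|Dg(\cdot,0)|$ bounded above and below, so near $\Gamma_0$ one may perform a hodograph transform --- of the type leading to \eqref{1-h-eq2} --- that straightens the free boundary $\Gamma_t$ onto a fixed hyperplane $\{x_n=0\}$. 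In the new variables the flow \eqref{GCF-p} becomes a fully nonlinear parabolic equation for a function $h$ on a domain $Q\subset\R^{n,+}\times(0,T_0]$ whose linearization near $\{x_n=0\}$ is modelled on the Daskalopoulos--Hamilton operator
\beq\label{model-op}
\mathcal L := x_n\,\p_{nn}+b\,\p_n+{\textstyle\sum_{i,j<n}}a^{ij}\p_{ij}+{\textstyle\sum_{k<n}}b^k\p_k-\p_t,
\eeq
with $b>0$ and $(a^{ij})$ uniformly elliptic; the sign of $b$ and the ellipticity of $(a^{ij})$ come precisely from (I1)--(I2), and (I3) places the initial data in the weighted space $C_\mu^{2+\alpha}$ in which operators of the form \eqref{model-op} admit Schauder estimates.

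Granting those weighted Schauder estimates in the scale $C_\mu^{k,2+\alpha}$ (the analytic heart of \cite{DH1999}, together with the estimates of \cite{HTW} that are used later in this paper), I would prove short-time existence by a contraction-mapping argument. Writing the equation for $h$ as $h_t=\mathcal L h+N(h)$, with $N$ collecting the quadratic and higher-order nonlinearities in $h-h_0$, one checks that the solution operator of the linear inhomogeneous problem for $\mathcal L$ maps a small ball $\{\,\|h-h_0\|_{C_\mu^{2+\alpha}(\overline Q)}\le\eps\,\}$ into itself and is a contraction there once $T_0$ is taken small, where $h_0$ is the datum built from $g(\cdot,0)$. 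Undoing the transformations then produces the hypersurfaces $\M_t$ for $0<t\le T_0$ with $g(\cdot,t)\in C_\mu^{2+\alpha}$ at each such $t$, which is (I3); and since $p>\frac1n$, a barrier comparison in the spirit of \cite{H1993,A2000} shows the flat side $F_t$ does not vanish on $[0,T_0]$, so $\Gamma_t$ remains a genuine interface throughout.

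It then remains to propagate (I1) and (I2). The same argument shows $t\mapsto g(\cdot,t)$ is continuous into $C_\mu^{2+\alpha}$ with initial value $g(\cdot,0)$, so (working in the fixed coordinates after the hodograph transform) $Dg(\cdot,t)\to Dg(\cdot,0)$ uniformly up to $\Gamma_t$ and $D^2v(\cdot,t)\to D^2v(\cdot,0)$ locally uniformly in $\{v>0\}$ as $t\to0$. Since uniform convexity of the level sets $\{v=\eps\}$ and the two-sided bound $\lambda_0\le|Dg|\le\lambda_0^{-1}$ on the interface are open conditions, they survive for all $t\in(0,T_0]$ after shrinking $T_0$ once more; this gives (I1)--(I2) at every time and closes the argument.

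I expect the real difficulty to lie in the first two steps rather than in this last one. The equation for $h$ is fully nonlinear and \emph{not} concave, and, as noted after \eqref{1-h-eq2}, the weight $y_{n+1}^{2/\sigma_p}$ is merely H\"older continuous once $p>\frac1{n-2}$, so neither Evans--Krylov nor the standard degenerate-parabolic Schauder theory of \cite{L1996,JW2014} is available off the shelf; establishing the weighted Schauder estimate for $\mathcal L$ in the $C_\mu$ scale --- and with it the self-map and contraction properties in the fixed-point step --- is where the substantive work lies.
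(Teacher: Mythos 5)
The paper does not prove this proposition at all: it is quoted as Theorem 9.1 of \cite{DH1999}, with Remark \ref{rm1} only observing that the proof extends to $n\ge 3$ and also yields (I4), and your sketch follows precisely the route of that cited proof --- hodograph (pressure-type) transformation, the model degenerate operator $x_n\partial_{nn}+b\,\partial_n+\cdots$ with $b>0$, the weighted $C^{2+\alpha}_\mu$ Schauder theory, a fixed-point argument, and openness of (I1)--(I2). Your proposal is therefore correct in approach and essentially coincides with the argument the paper relies on by citation, with the analytic core (the $C_\mu$ Schauder estimates) deferred just as the paper defers it to \cite{DH1999}.
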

\begin{remark}\label{rm1}
\rm{Proposition \ref{DH1999} is proved in} \rm{\cite{DH1999}} for $n=2$. The proof also holds for high dimension case $n\ge 3$. Moreover, for $0<t\le T_0$,  the proof also implies the following condition (see Theorem 9.2 in \cite{DH1999}):
\begin{itemize}
\item[(I4)]
 $g_{ij}\tau_i g_j\in L^\infty(\{v>0\})$,
 where $\tau=(\tau_1,\cdots,\tau_n)$ is any tangent vector field of the level set of $g$, i.e.  $\tau\cdot \nabla g=0$.
\end{itemize}
Therefore, choosing a sufficiently small $t_0>0$ as the initial time, we may assume that (I4) holds at $t=0.$
\end{remark}

To prove Theorem \ref{thm-g}, we then tap into some estimates obtained in previous work \cite{HWZ}.

Let $u(\cdot, t)$ be the Legendre transformation of $v(\cdot, t)$, i.e.
\begin{equation}\label{def-u}
u(x,t)=\sup\{y\cdot x-v(y,t)\ |\ y\in\Omega_t\}, \ \   x\in D_yv(\Omega_t)=\R^n.
\end{equation}
Then $u(x,t)$ satisfies the equation
\begin{equation}\label{u}
\det D^2 u=\frac{1}{(-u_t)^{\frac 1p}(1+|x|^2)^{\frac{(n+2)p-1}{2p}}}+c_t\delta_0,
\end{equation}
where $c_t$ is the volume of the flat part.  Hence $c_t>0$ for $t\in [0,T^*)$.
Without loss of generality,  we assume that the origin is an interior point of the convex set $\{v(\cdot, t)=0\}$ for all $t\in[0,T^*)$.
Then for any given $T\in (0, T^*)$, there is a positive constant $\rho_0$ such that
\begin{equation}\label{rho-0}
B_{\rho_0}(0)\subset\subset  \{y\in\mathbb R^n \ |\  v(y,t)=0\}, \ \   \forall~t\in[0,T].
\end{equation}

\begin{lemma}[\cite{HWZ}] \label{lemma-u}
Assume the conditions {\rm (I1)-(I4)}.
Then
\beq\label{urr2}
{\begin{split}
-u_{t}(x,t) & \approx  |x|, \\
u_{rr}(x,t) & \approx |x|^{n-1-1/p}, \\
u_{\xi\xi}(x, t) & \approx |x|^{-1},
\end{split}} \eeq
for any $x\in B_1(0)\backslash \{0\}$, $t\in [0,T]$ and
any unit vector $\xi\perp \overrightarrow{ox}$, where
$u_{rr}(x,t) := \frac1{|x|^2} x_ix_j u_{ij}(x,t)$.
\end{lemma}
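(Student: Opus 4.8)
\emph{Proof proposal.} The plan is to read off the three estimates from the corresponding information about the height function $v$, via the Legendre duality between $u$ and $v$. If $x=Dv(y,t)$ and $y=Du(x,t)$, then $u(x,t)+v(y,t)=x\cdot y$, so $-u_t(x,t)=v_t(y,t)$ and $D^2u(x,t)=[D^2v(y,t)]^{-1}$ whenever $x\ne 0$. Here $x\ne 0$ corresponds, under the gradient map, exactly to $y$ in the strictly convex region $\{v(\cdot,t)>0\}$ --- indeed $v(\cdot,t)$ is convex with minimum set $K_t:=\{v(\cdot,t)=0\}$, and by the $C^{1,\alpha}$ regularity of \cite{DS2009} one has $Dv\equiv 0$ on $\overline{K_t}$ --- where $v$ is $C^2$ by interior regularity for \eqref{v} together with the a priori bounds near $\Gamma_t$ from \cite{HWZ}. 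Thus it suffices to track $v_t$ and the eigenvalues of $D^2v$ as $y\to\Gamma_t$, and to match the relevant directions under the gradient map.

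I would first fix the geometry near $\Gamma_t$. By Proposition \ref{DH1999}, conditions (I1)--(I4) persist on $[0,T]$ with uniform constants; condition (I2) gives $g(\cdot,t)\approx\text{dist}(\cdot,\Gamma_t)$ and $|Dg(\cdot,t)|\approx 1$, so writing $\kappa:=\tfrac{\sigma_p+1}{\sigma_p}$ and $d:=\text{dist}(\cdot,\Gamma_t)$, the definition \eqref{def-g} yields the exact identities $v=\tfrac{\sigma_p}{\sigma_p+1}g^{\kappa}$, $Dv=g^{\kappa-1}Dg$, and $D^2v=(\kappa-1)g^{\kappa-2}\,Dg\otimes Dg+g^{\kappa-1}D^2g$, with $\kappa-1=\tfrac1{\sigma_p}$. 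In the orthonormal frame $(\nu,e_1,\dots,e_{n-1})$ with $\nu=Dg/|Dg|$ (so $\nu$ is parallel to $Dv$, hence to $\overrightarrow{ox}$, and the $e_i$ span the tangent space of the level set through $y$), condition (I1) controls the tangential block of $D^2g$ above and below, condition (I4) bounds the mixed entries $g_{\nu e_i}$, and condition (I3) ($g\in C^{2+\alpha}_\mu$, preserved by Proposition \ref{DH1999}) gives $|g_{\nu\nu}|\lesssim d^{-1}$. Inserting these into the identity for $D^2v$, I expect, for $y\in\{v>0\}$ near $\Gamma_t$,
\[
v_{\nu\nu}\approx d^{\frac1{\sigma_p}-1},\qquad v_{e_ie_j}\approx d^{\frac1{\sigma_p}}\delta_{ij},\qquad |v_{\nu e_i}|\lesssim d^{\frac1{\sigma_p}},
\]
whence $\det D^2v\approx d^{\frac n{\sigma_p}-1}$ and $1+|Dv|^2\approx 1$.

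Granting this, the rest is bookkeeping. Since $\overrightarrow{ox}\parallel\nu$ and the mixed entries are of lower order, the block-inversion (Schur complement) formula gives $u_{rr}(x,t)=\big([D^2v]^{-1}\big)_{\nu\nu}\approx v_{\nu\nu}^{-1}\approx d^{1-\frac1{\sigma_p}}$ and $u_{\xi\xi}(x,t)=\big([D^2v]^{-1}\big)_{\xi\xi}\approx d^{-\frac1{\sigma_p}}$ for any unit $\xi\perp\overrightarrow{ox}$; and from $|x|=|Dv|=g^{\kappa-1}|Dg|\approx d^{1/\sigma_p}$, i.e.\ $d\approx|x|^{\sigma_p}$, these become $u_{rr}\approx|x|^{\sigma_p-1}=|x|^{n-1-1/p}$ and $u_{\xi\xi}\approx|x|^{-1}$. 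For the speed, \eqref{v} gives
\[
-u_t(x,t)=v_t(y,t)=\frac{(\det D^2v(y,t))^{p}}{(1+|Dv(y,t)|^2)^{\frac{(n+2)p-1}{2}}}\approx d^{\,p\left(\frac n{\sigma_p}-1\right)}=d^{\frac1{\sigma_p}}\approx|x|,
\]
because $p\!\left(\tfrac n{\sigma_p}-1\right)=\tfrac{pn-p\sigma_p}{\sigma_p}=\tfrac1{\sigma_p}$ since $\sigma_p=n-\tfrac1p$. Alternatively, near $x=0$ one may sandwich $u(\cdot,t)$ between the barriers $h_{K_t}+a|x|^{\sigma_p+1}$ and $h_{K_t}+A|x|^{\sigma_p+1}$ in \eqref{u}, where $h_{K_t}$ is the support function of the flat set $K_t\supset B_{\rho_0}$ and $c_t\approx 1$; as $h_{K_t}$ is $1$-homogeneous, $x_ix_jh_{K_t,ij}\equiv 0$, so $u_{rr}$ reads off the perturbation while $u_{\xi\xi}$ reads off the tangential Hessian of $h_{K_t}$, which is $\approx|x|^{-1}$ by the uniform convexity of $\Gamma_t$.

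The hard part is the second step: converting ``$v\approx d^{\kappa}$ with uniformly convex level sets'' into genuine \emph{two-sided} bounds for $D^2v$ up to $\Gamma_t$, uniformly for $t\in[0,T]$. The tangential bounds come from (I1) and the smallness of the mixed entries from (I4) --- this is exactly what lets one match radial and tangential directions under the gradient map without losing the sharp exponents. The genuinely delicate estimate is the \emph{lower} bound $v_{\nu\nu}\gtrsim d^{\frac1{\sigma_p}-1}$ --- equivalently the \emph{upper} bound $u_{rr}\lesssim|x|^{n-1-1/p}$, and the matching $-u_t\gtrsim|x|$ (equivalently $g_t\approx 1$ near $\Gamma_t$) --- which does not follow from (I1)--(I4) alone and must be drawn from the flow equation \eqref{v} itself; one breaks the coupling between this and $-u_t\approx|x|$ by a maximum-principle / barrier argument for an appropriate quantity built from $v$, or for $u$ through \eqref{u} using the Dirac mass $c_t\approx 1$. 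This is where the estimates of \cite{HTW,HWZ} on the strictly convex part enter, and where one verifies that $u$ is $C^2$ on $\R^n\setminus\{0\}$ by interior Monge--Amp\`ere regularity away from the singular support of \eqref{u}.
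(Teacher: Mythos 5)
There is a genuine gap, and it is structural. The estimates \eqref{urr2} are \emph{long-time} statements, valid for all $t\in[0,T]$ with any $T<T^*$, and they are imported here from \cite{HWZ} precisely because they do not follow from the initial-time hypotheses by soft arguments. Your first step --- ``by Proposition \ref{DH1999}, conditions (I1)--(I4) persist on $[0,T]$ with uniform constants'' --- is not available: Proposition \ref{DH1999} only produces a short time $T_0$ on which (I1)--(I4) persist, and the persistence up to an arbitrary $T<T^*$ of uniform convexity of the level sets, of $|Dg|\approx 1$, and of the bound on the mixed derivatives is essentially the main theorem of \cite{HWZ}, whose proof \emph{uses} the $u$-estimates of Lemma \ref{lemma-u}. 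So your route is circular: you feed in two-sided bounds on the Hessian of $g$ (equivalently on the curvature of the level sets) for all $t\in[0,T]$ in order to output \eqref{urr2}, whereas in this paper and in \cite{HWZ} the logic runs in the opposite direction --- compare Step 2 of the proof of Lemma \ref{Lemma-3.1}, where $G\approx I_{n\times n}$ and the level-set bounds are \emph{derived from} \eqref{urr2}.

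Even setting aside the time interval, the Legendre-duality bookkeeping (which is fine as far as it goes: $-u_t=v_t$, $D^2u=[D^2v]^{-1}$, $d\approx|x|^{\sigma_p}$) is not where the content lies, and the part you label ``the hard part'' is exactly the part you do not prove. As you note, (I1), (I3), (I4) give at best the one-sided bounds ($v_{\nu\nu}\lesssim d^{1/\sigma_p-1}$, since $g^{\kappa-1}g_{\nu\nu}$ can cancel against $(\kappa-1)g^{\kappa-2}g_\nu^2$), so the upper bound $u_{rr}\lesssim|x|^{n-1-1/p}$, the two-sided bound $-u_t\approx|x|$, and the uniform convexity needed for $u_{\xi\xi}\approx|x|^{-1}$ at positive times are all left to ``a maximum-principle/barrier argument'' and to \cite{HTW,HWZ}; deferring them to the very reference being proved is not a proof. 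In \cite{HWZ} these estimates are obtained by working directly on the dual problem \eqref{u}, exploiting the Dirac mass $c_t\delta_0$ and the Monge--Amp\`ere obstacle-problem estimates of \cite{HTW}, together with barrier constructions for $u$ near the singular point; note also that sandwiching $u$ between $h_{K_t}+a|x|^{\sigma_p+1}$ and $h_{K_t}+A|x|^{\sigma_p+1}$ controls $u$ itself but does not yield pointwise two-sided second-derivative bounds, and the uniform convexity of $K_t$ for $t>0$ is again part of what has to be established rather than assumed.
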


Denote $r=|x|$.
Let
\begin{equation}\label{Trans2}
   \zeta(\theta, s, t) =\frac{u(\theta, r, t)}{r},\ \ \
   s =r^{\frac{\sigma_p}2},
\end{equation}
where $(\theta, r)$ is the spherical coordinates for $x$.
Then $\zeta$ satisfies the  parabolic Monge-Amp\`ere type equation \cite{HWZ}:
\begin{equation}\label{1-po2}
-\zeta_t\det \begin{pmatrix}
 \zeta_{ss}+\frac{2+\sigma_p}{\sigma_p}\frac{\zeta_s}{s}   &   \zeta_{s\theta_{1}}&\cdots& \zeta_{s\theta_{n-1}}\\[3pt]
 \zeta_{s\theta_1} & \zeta_{\theta_1\theta_1}+\zeta+\frac {\sigma_p}2 s\zeta_s & \cdots&\zeta_{\theta_1\theta_{n-1}}\\[3pt]
                    \cdots &\cdots &\cdots &\cdots  \\[3pt]
 \zeta_{s\theta_{n-1}} &\zeta_{\theta_1\theta_ {n-1}} & \cdots&\zeta_{\theta_{n-1}\theta_{n-1}}+\zeta+\frac {\sigma_p}2 s\zeta_s
\end{pmatrix}^p=\bar {F} (s),
\end{equation}
in $\{s>0\}$,
where
$\bar {F} (s)= 4^p \sigma_p^{-2p} \big(1+s^{4/\sigma_p}\big)^{-\frac{(n+2)p-1}2} . $

\begin{lemma}[Theorem 6.3, \cite{HWZ}]\label{lemma-zeta} \
Assume the conditions {\rm (I1)-(I4)}. We have
\begin{equation}
\|\zeta\|_{ C^{2+\alpha_0}(\mathbb S^{n-1}\times [0,1]\times [\sigma,T])}\le C, \ \ \ \forall\ 0<\sigma<T<T^*,
\end{equation}
where the constants $\alpha_0\in(0,1)$ and $C>0$ depend only on $\mathcal M_0, n, p, \sigma, T$.
\end{lemma}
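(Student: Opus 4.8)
\smallskip
\noindent\emph{Proof proposal.}
Since the asserted bound is \emph{a priori}, the plan is to assume $\zeta$ smooth for $s>0$ and control its (parabolic, \emph{unweighted}) $C^{2+\alpha_0}$ norm up to the degenerate set $\{s=0\}$; the goal is in particular genuine H\"older continuity of $\zeta_{ss}$ and of $\zeta_s/s$ there. First I would read off the structure of \eqref{1-po2} from Lemma \ref{lemma-u}: transporting the two-sided bounds \eqref{urr2} through the change of variables \eqref{Trans2} (so $r=s^{2/\sigma_p}$ and $\zeta=u/r$), with $u_{rr}\approx r^{\sigma_p-1}$ controlling the $(1,1)$-entry, $u_{\xi\xi}\approx r^{-1}$ the spherical block and convexity of $u$ the off-diagonal entries, one should obtain on $\mathbb S^{n-1}\times[0,1]\times[\sigma,T]$ that
\begin{equation*}
-\zeta_t\approx1,\qquad c_0\,\mathrm{Id}\le M[\zeta]\le c_0^{-1}\,\mathrm{Id},
\end{equation*}
where $M[\zeta]$ denotes the symmetric matrix in \eqref{1-po2} and $c_0>0$. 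In particular the $(1,1)$-entry $\zeta_{ss}+\tfrac{2+\sigma_p}{\sigma_p}\tfrac{\zeta_s}{s}$ and each $\zeta_{s\theta_i}$ are bounded, $\zeta_s/s$ is bounded (so $\zeta_s$ vanishes on $\{s=0\}$), and $\zeta,D_\theta\zeta$ are bounded. Writing \eqref{1-po2} as $\log(-\zeta_t)+p\log\det M[\zeta]=\log\bar F(s)$, the equation is then uniformly parabolic with operator concave in $(\zeta_t,D^2\zeta)$ on each $\{s\ge\delta\}$, while at $\{s=0\}$ it is of Daskalopoulos--Hamilton type: in the linearization the coefficient of $\zeta_s$ contains the singular term $\tfrac{2+\sigma_p}{\sigma_p}\,M^{11}/s$ with positive sign, and $\bar F$ is H\"older and bounded between positive constants.

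With this structure in hand, Evans--Krylov gives interior $C^{2,\alpha}$ bounds on each $\{s\ge\delta\}$ (constants possibly blowing up as $\delta\to0$), and the real task is uniformity up to $\{s=0\}$. Since $\bar F=\bar F(s)$, I would differentiate the equation in the directions $\theta_1,\dots,\theta_{n-1}$ and $t$: each $w\in\{\zeta_{\theta_1},\dots,\zeta_{\theta_{n-1}},\zeta_t\}$ then solves a \emph{homogeneous} linear parabolic equation whose principal part has coefficients $M^{ij}[\zeta]$ (uniformly elliptic by the first step), whose only singular coefficient is the $1/s$-term multiplying $w_s$ with good sign, and whose remaining coefficients are bounded. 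Together with the concave structure of \eqref{1-po2}, this is exactly the class of degenerate parabolic problems covered by the estimates of \cite{HTW} (in the spirit of \cite{DH1999,DL2004}), which should furnish a H\"older modulus for $w$ and its first derivatives that is uniform up to $\{s=0\}$; this gives $\zeta_{\theta_i\theta_j},\,\zeta_{s\theta_i},\,\zeta_{t\theta_i},\,\zeta_t\in C^{\alpha_0}$ up to $\{s=0\}$ for some $\alpha_0\in(0,1)$. It then remains to recover $\zeta_{ss}$ and $\zeta_s/s$: expanding $\det M[\zeta]$ along its first row and column rewrites \eqref{1-po2} as an identity $\big(\zeta_{ss}+\tfrac{2+\sigma_p}{\sigma_p}\tfrac{\zeta_s}{s}\big)A=\tilde f$, with $A$ the determinant of the lower-right block of $M[\zeta]$ (H\"older and bounded below) and $\tilde f$ built from $-\zeta_t,\bar F,\zeta_{s\theta_i},\zeta_{\theta_i\theta_j}$, so $\tilde f/A\in C^{\alpha_0}$. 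For fixed $(\theta,t)$ this is the linear ODE $(s^{c}\zeta_s)_s=s^{c}(\tilde f/A)$ with $c=\tfrac{2+\sigma_p}{\sigma_p}>0$, and boundedness of $\zeta_s$ forces $\zeta_s(\theta,s,t)=s^{-c}\int_0^s\tau^{c}(\tilde f/A)(\theta,\tau,t)\,d\tau$; elementary estimates on this formula promote the bounds to $\zeta_s/s,\zeta_{ss}\in C^{\alpha_0}$ up to $\{s=0\}$. Combined with the interior bound and a covering argument, this should yield $\|\zeta\|_{C^{2+\alpha_0}(\mathbb S^{n-1}\times[0,1]\times[\sigma,T])}\le C$.

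The step I expect to be the main obstacle is precisely the $C^{2,\alpha}$ estimate uniform up to $\{s=0\}$. The classical parabolic theory \cite{L1996,JW2014} does not apply here: the degenerate coefficient $y_{n+1}^{2/\sigma_p}$ in the Hodograph equation \eqref{1-h-eq2} — equivalently the $s^{4/\sigma_p}$ appearing in $\bar F$ and the $1/s$ structure of $M[\zeta]$ — is only H\"older, not Lipschitz, once $p>\tfrac1{n-2}$, and \eqref{1-po2} has no concavity in the usual form at the degenerate set. This forces the use of the tailored degenerate-parabolic theory of Daskalopoulos--Hamilton \cite{DH1999} (for $n=2$) and its higher-dimensional refinements and the estimates in \cite{HTW}; most of the remaining effort then lies in the first step, namely transporting the two-sided bounds of Lemma \ref{lemma-u} through \eqref{Trans2} carefully enough to control \emph{every} entry of $M[\zeta]$ — including the mixed second derivatives and the singular combination $\zeta_{ss}+\tfrac{2+\sigma_p}{\sigma_p}\tfrac{\zeta_s}{s}$ — which is what renders the equation uniformly parabolic off $\{s=0\}$ and opens the door to that theory.
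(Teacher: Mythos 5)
First, a point of comparison: the paper itself contains no proof of this lemma --- it is imported verbatim from \cite{HWZ} (Theorem 6.3 there), and the present paper only uses it as a black box. So your proposal has to be judged as a proof of the \cite{HWZ} result, not against an argument given in this text. Within that framing, your first and last steps are sound: pushing the two-sided bounds \eqref{urr2} through \eqref{Trans2} does give that the $(1,1)$-entry equals $\frac{4}{\sigma_p^2}r^{1-\sigma_p}u_{rr}\approx 1$, the spherical entries equal $r\,u_{\xi\xi}\approx 1$, the mixed entries are bounded by convexity, and together with $-\zeta_t\approx 1$ and $\det M[\zeta]\approx 1$ from the equation one gets $M[\zeta]\approx I$; likewise the final ODE step $(s^{c}\zeta_s)_s=s^{c}\tilde f/A$ with $c=\frac{2+\sigma_p}{\sigma_p}$ correctly recovers $\zeta_s/s$ and $\zeta_{ss}$ in $C^{\alpha_0}$ once $\tilde f/A\in C^{\alpha_0}$ up to $\{s=0\}$ (the present paper uses the same device in Case 2a of its proof of Theorem \ref{thm-g}).

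The genuine gap is the middle step, which is exactly the analytic core of the theorem and which you assert rather than prove. After differentiating \eqref{1-po2} in $\theta_i$ and $t$, all you know is that $w\in\{\zeta_{\theta_i},\zeta_t\}$ solves a linear equation whose coefficients $M^{ij}[\zeta]$ are merely \emph{bounded measurable} (comparable to the identity), with a singular drift of good sign. Krylov--Safonov/\cite{DL2003}-type estimates (the analogue of Lemma \ref{Lemma-2.3}) then give a H\"older bound for $w$ itself, i.e.\ for the \emph{first} derivatives of $\zeta$; but your scheme needs $\zeta_{\theta_i\theta_j},\zeta_{s\theta_i},\zeta_t\in C^{\alpha_0}$ up to $\{s=0\}$, i.e.\ $C^{1,\alpha}$ for $w$. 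A Schauder estimate of the type of Lemma \ref{Lemma-2.4} would deliver this only if the coefficients $M^{ij}[\zeta]$ were already known to be H\"older up to $\{s=0\}$ --- which is precisely the conclusion being sought, so the argument as written is circular. What is actually required is a boundary $C^{2,\alpha}$ (Evans--Krylov/Krylov-type) estimate for the degenerate \emph{nonlinear} equation itself, exploiting the concavity of $\log(-\zeta_t)+p\log\det M[\zeta]$ (note, contrary to your closing paragraph, the $\zeta$-equation \emph{is} concave in this form; it is the $h$-equation \eqref{1-h-eq2} that is not) near the singular-drift model $\partial_{ss}+\frac{c}{s}\partial_s$. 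Establishing such an estimate is the content of the work in \cite{HWZ} (drawing on \cite{HTW}); saying that this setting ``is exactly the class covered by'' those references, without verifying their hypotheses or reproducing the estimate, leaves the hardest step unproved.
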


Next we quote the $C^{\alpha}$ and $C^{2,\alpha}$ estimates
for degenerate linear parabolic equations which are needed later.
Given a point $p_0=(x_0, t_0)=(x'_0,x_{0,n},t_0)\in \mathbb R^{n,+} \times \R$ , denote
\begin{equation}\label{cylinder}
Q^*_\rho(p_0)=\{(x, t)\ |\
 x_n > 0, |x'-x'_0|< \rho, |x_n-x_{0,n}| < \rho^2, t_0-\rho^2 < t\le t_0\} ,
\end{equation}
which is a cylinder in $\mathbb R^{n,+} \times \R$.
When $p_0=(0, 0)$, we simply write  $Q^*_\rho=Q^*_\rho(p_0)$.

Consider the following linear degenerate operator
\begin{equation}\label{l++}
L_+ U:= - U_t + a_{nn}x_n\p_{nn} U
    + {\Small\text{$\sum_{i=1}^{n-1}$}} 2a_{in} \sqrt{x_n} \p_{in} U
    + {\Small\text{$\sum_{i,j=1}^{n-1}$}} a_{ij} \p_{ij} U
    +{\Small\text{$\sum_{i=1}^n$}}b_i \partial_i U      
\end{equation}
with variable coefficients $a_{ij}, b_i$ defined in the cylinder $Q^*_\rho$.

\begin{lemma}\label{Lemma-2.3}
Assume that the coefficients $a_{ij}, b_i$ are measurable
and satisfy
\begin{equation*}
\begin{split}
 &  a_{ij}\xi_i\xi_j  \ge \lambda  |\xi|^2  \ \ \  \forall\ \xi\in\mathbb R^n, \\
 & |a_{ij} |,  |b_{i}| \le \lambda^{-1},
  \end{split}
\end{equation*}
and
\begin{equation*}
\frac{2b_n}{a_{nn}}\ge \nu, \hskip50pt
\end{equation*}
for some constants $\lambda,\nu\in(0,1)$.
Let $U\in C^2(\overline {Q_\rho})$ be the solution to $L_+ U = f$.
Then there exists $\alpha\in (0, 1)$ such that  for any $\rho' \in(0, \rho)$, it holds 
\begin{equation}\label{a418z}
\|U\|_{ C_{\mu}^\alpha(Q^*_{\rho'})}
     \le C \Big(\sup_{Q^*_\rho}|U|+\Big(\int_{Q^*_\rho} |f|^{n+1} x_n^{\frac{\nu}{2}-1} dx dt\Big)^{\frac1{n+1}}\Big),
\end{equation}
where the positive constant $C$ depends only on $n, \rho, \rho', \lambda$ and $\nu$.
\end{lemma}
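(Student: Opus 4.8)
The plan is to establish the weighted $C^\alpha_\mu$ estimate \eqref{a418z} by combining a weighted energy (De~Giorgi--Nash--Moser type) iteration with the observation that the degenerate operator $L_+$ is, after the change of variables $x_n = z^2$ (equivalently $z = \sqrt{x_n}$), comparable to a uniformly parabolic operator with a drift term whose sign is controlled by the hypothesis $2b_n/a_{nn}\ge\nu$. Concretely, writing $z=\sqrt{x_n}$ one has $\partial_n = \tfrac1{2z}\partial_z$ and $x_n\partial_{nn} = \tfrac14\partial_{zz} - \tfrac1{4z}\partial_z$, so that
\begin{equation*}
a_{nn}x_n\partial_{nn}U + 2a_{in}\sqrt{x_n}\,\partial_{in}U + a_{ij}\partial_{ij}U + b_n\partial_n U
= \tfrac14 a_{nn}\partial_{zz}U + a_{in}\partial_{iz}U + a_{ij}\partial_{ij}U + \Big(\tfrac{b_n}{2z}-\tfrac{a_{nn}}{4z}\Big)\partial_z U + \cdots,
\end{equation*}
and the coefficient of $\partial_z U$ equals $\tfrac{1}{4z}(2b_n/a_{nn}-1)a_{nn}$, which is $\ge \tfrac{\lambda}{4z}(\nu-1)$; more importantly the measure $x_n^{\nu/2-1}dx\,dt = 2 z^{\nu-1}dz\,dx'\,dt$ is exactly the weight that makes the drift term a \emph{symmetric} first-order perturbation in the half-space $\{z>0\}$ with Neumann-type behaviour at $z=0$. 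So the first step is to rewrite $L_+U=f$ in the $z$ variable and identify the natural weighted space $L^2(z^{\nu-1}dz\,dx'\,dt)$.

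Next I would prove a weighted Caccioppoli inequality and a weighted Sobolev inequality in this setting. The Caccioppoli estimate is obtained by multiplying $L_+U=f$ by $\eta^2(U-k)_+$ (with $\eta$ a cutoff and $k$ a truncation level), integrating against the weight $z^{\nu-1}$, and integrating by parts in $z$; the boundary term at $z=0$ drops because $z^{\nu-1}\cdot z\to 0$ appropriately, and the bad drift term $\tfrac1z\partial_z U$ is absorbed precisely because $\int \tfrac1z (\partial_z U) \eta^2 (U-k)_+ z^{\nu-1} = \tfrac12\int \partial_z[(U-k)_+^2]\eta^2 z^{\nu-2}$, which after one more integration by parts in $z$ combines with the good term and with the sign condition $2b_n\ge\nu a_{nn}$ to leave a nonnegative quadratic form plus controllable lower-order terms. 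The weighted Sobolev embedding (Fabes--Kenig--Serapioni style, since $z^{\nu-1}$ is a Muckenhoupt $A_2$ weight on $\R^1$ for $\nu\in(0,1)$, and hence $z^{\nu-1}dz\,dx'$ is $A_2$ on $\R^n_+$ after the standard degenerate-Sobolev argument) gives the gain of integrability needed to run De~Giorgi iteration. Together these yield local boundedness and then an oscillation-decay estimate $\mathrm{osc}_{Q^*_{\theta\rho}}U \le (1-\theta_0)\,\mathrm{osc}_{Q^*_\rho}U + C\rho^{\alpha}\|f\|_{\mathrm{weighted}}$, from which \eqref{a418z} in the $\mu$-metric follows by the usual iteration over dyadic parabolic cylinders, noting that the intrinsic parabolic distance $\mu$ is exactly the distance for which the cylinders $Q^*_\rho$ are balls. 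Here I would either cite the corresponding statement in \cite{HTW} (as the introduction indicates the authors rely on estimates from there) or reproduce the one-paragraph iteration.

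The exponent $n+1$ and the power $z^{\nu-1}$ (i.e. $x_n^{\nu/2-1}$) in the right-hand side of \eqref{a418z} are dictated by scaling: under the parabolic dilation $(x',x_n,t)\mapsto(\rho x',\rho^2 x_n,\rho^2 t)$ the cylinder $Q^*_\rho$ rescales to $Q^*_1$, the homogeneous dimension of $\R^{n-1}\times\R^+\times\R$ with the weight $x_n^{\nu/2-1}$ works out so that $\|f\|_{L^{n+1}(x_n^{\nu/2-1})}$ is scale-invariant, and $n+1$ is above the critical exponent, giving the $C^\alpha_\mu$ gain — this is why $n+1$ rather than $n$ appears. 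The main obstacle is the simultaneous presence of (a) the singular drift $\tfrac1z\partial_z$ with the \emph{wrong} sign in general and (b) the merely measurable, non-divergence-form coefficients $a_{ij}$: one cannot integrate by parts in $a_{ij}\partial_{ij}U$ as in the divergence case, so the De~Giorgi argument must be carried out in the ABP/Krylov--Safonov spirit (measure estimates on level sets, a weighted barrier, a weighted growth lemma) rather than via energy alone — or, alternatively, the drift must be handled by an exponential change of unknown $U\mapsto e^{\lambda z^{\text{(something)}}}U$ that trades it for a good zeroth-order term. I expect that reconciling the non-divergence structure with the degenerate weight is the technically delicate point, and that is exactly where the estimates imported from \cite{HTW} do the work; everything downstream (truncation, iteration, passage to the $\mu$-H\"older norm on $Q^*_{\rho'}$) is routine.
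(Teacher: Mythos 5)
First, note that the paper does not prove Lemma \ref{Lemma-2.3} at all: it is quoted with a pointer to \cite[Theorem 3.1]{DL2003} or \cite[Theorem 3.3]{L2016}, so the "paper's proof" is a citation to the Krylov--Safonov/ABP-type theory for degenerate operators of this Keldysh/Grushin type. Measured against that, your proposal has a genuine gap. The machinery you actually develop --- the change of variables $x_n=z^2$, the weighted Caccioppoli inequality obtained by testing with $\eta^2(U-k)_+$ against the $A_2$ weight $z^{\nu-1}$, and the Fabes--Kenig--Serapioni Sobolev embedding --- is a divergence-form energy method, and it cannot be run here because $a_{ij}$, $b_i$ are only measurable and the equation is in non-divergence form: testing $a_{ij}\partial_{ij}U$ with $\eta^2(U-k)_+$ produces terms involving derivatives of $a_{ij}$ that do not exist, and no rewriting into divergence form is available. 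You notice this yourself in the final paragraph, but the fallback you offer is only a list of names ("measure estimates on level sets, a weighted barrier, a weighted growth lemma") without any of the actual constructions; the two quantitative facts that constitute the lemma --- that the correct ABP-type estimate carries the weight $x_n^{\nu/2-1}$ and the exponent $n+1$, and that the hypothesis $2b_n/a_{nn}\ge\nu$ yields a barrier/oscillation-decay estimate in the $\mu$-cylinders up to $\{x_n=0\}$ --- are exactly what must be proved and are nowhere established. The scaling heuristic you give for the exponent $n+1$ and the symmetry heuristic for the weight are plausible motivation, but they are not proofs. The alternative you mention, an exponential change of unknown to trade the drift for a zeroth-order term, does not work either: after the substitution $x_n=z^2$ the drift is of size $z^{-1}$ near the boundary, and a factor $e^{\phi(z)}$ with $\phi'$ bounded (which is all one can afford) cannot cancel it; in any case the condition $2b_n/a_{nn}\ge\nu$ is not a nuisance to be removed but the structural hypothesis that makes the boundary estimate true.

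A second, smaller issue is attribution: you defer the delicate step to "estimates imported from \cite{HTW}", but \cite{HTW} is used elsewhere in the paper (for the estimates on $u$ and $\zeta$); the source of Lemma \ref{Lemma-2.3} is \cite{DL2003} or \cite{L2016}. If your intention was to reduce the lemma to known results, the correct and fully legitimate move --- and the one the authors make --- is to cite \cite[Theorem 3.1]{DL2003} or \cite[Theorem 3.3]{L2016} and check that the hypotheses (uniform ellipticity, boundedness, measurability, and $2b_n/a_{nn}\ge\nu$) match; as written, your argument neither does that nor supplies a self-contained replacement.
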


For the proof of Lemma \ref{Lemma-2.3},
we refer the readers to \cite[Theorem 3.1]{DL2003} or \cite[Theorem 3.3]{L2016}.

\begin{lemma}[Schauder estimate \cite{DH1999}]\label{Lemma-2.4}\
Assume that the coefficients $a_{ij}, b_i\in C_\mu^{\alpha}(\overline{Q^*_\rho})$
for some $\alpha\in(0,1)$ and satisfy
\begin{equation*}
\begin{split}
  & a_{ij}\xi_i\xi_j\ge \lambda  |\xi|^2  \ \ \  \forall\ \xi\in\mathbb R^n, \\
  &  \|a_{ij}\|_{C_\mu^{\alpha}(\overline{Q^*_\rho})}, \  \ \|b_{i}\|_{C_\mu^{\alpha}(\overline{Q^*_\rho})} \le \lambda^{-1},
  \end{split}
\end{equation*}
and
\begin{equation*}
b_n\ge \lambda  \ \   \text{at} ~~\, \{x_n=0\}
\end{equation*}
for some constant $\lambda\in(0,1)$. 
Let $U\in C_\mu^{2+\alpha}(\overline{Q^*_\rho})$ be the solution to $L_+ U = f$.
Then for any given $\rho'\in(0,\rho)$, it holds
\begin{equation}
 \|U\|_{C_\mu^{2+\alpha}(\overline{Q^*_{\rho'}})}
       \le C \Big(\|U\|_{L^\infty(\overline{Q^*_\rho})} + \|f\|_{C_\mu^{\alpha}(\overline{Q^*_\rho})}\Big),
\end{equation}
where the positive constant $C$ depends only on $n, \alpha, \rho, \rho'$ and $\lambda$.
\end{lemma}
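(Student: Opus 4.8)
The plan is to prove Lemma \ref{Lemma-2.4} by the standard three‑step scheme for degenerate Schauder theory: (i) freeze the coefficients at a boundary point to obtain a constant‑coefficient model operator that degenerates on $\{x_n=0\}$; (ii) establish the sharp $C_\mu^{2+\alpha}$ a priori estimate for that model; (iii) pass to the variable‑coefficient operator by a perturbation/iteration argument, gluing with the classical interior parabolic Schauder estimate on $\{x_n>0\}$. Throughout I would compare $\mu$‑Hölder spaces with ordinary ones via the substitution $s=2\sqrt{x_n}$, under which $\mu$ becomes the usual parabolic distance in the variables $(x',s,t)$.

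\emph{Step (i).} Fixing $p_0=(x_0',0,t_0)\in\overline{Q^*_{\rho'}}\cap\{x_n=0\}$ and setting $s=2\sqrt{x_n}$, a direct computation gives $x_n\partial_{nn}=\partial_{ss}-\tfrac1s\partial_s$ and $\sqrt{x_n}\,\partial_{in}=\partial_{is}$, so in the variables $(x',s,t)$ the operator $L_+$ becomes uniformly parabolic away from $\{s=0\}$, the only singular term being $\tfrac{2b_n-a_{nn}}{a_{nn}}\cdot\tfrac1s\partial_s$. The hypotheses $b_n\ge\lambda$ on $\{x_n=0\}$ and $a_{nn}\le\lambda^{-1}$ force the Bessel parameter $\nu:=\tfrac{2b_n}{a_{nn}}-1$ to satisfy $\nu\ge 2\lambda^2-1>-1$, which is precisely the range in which the degenerate problem is regular up to $\{s=0\}$. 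After freezing the coefficients at $p_0$ and making an affine change of the tangential variables that fixes $\{x_n=0\}$, the model operator takes the canonical form
\[
\mathcal L_\nu U=-U_t+\Delta_{x'}U+U_{ss}+\tfrac{\nu}{s}U_s,
\]
i.e.\ the heat operator coupled to the Bessel operator $\partial_{ss}+\tfrac{\nu}{s}\partial_s$ in the normal direction.

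\emph{Steps (ii)--(iii).} The operator $\mathcal L_\nu$ has an explicit fundamental solution, the product of the Euclidean heat kernel in $(x',t)$ with the Bessel heat kernel in $s$ (given by a modified Bessel function, satisfying the natural Neumann‑type condition at $s=0$ when $\nu>-1$); differentiating this kernel and exploiting its cancellations yields, for $\mathcal L_\nu U=f$ with $f\in C_\mu^{\alpha}$, the bound $\|U\|_{C_\mu^{2+\alpha}}\lesssim\|U\|_{L^\infty}+\|f\|_{C_\mu^{\alpha}}$ on a smaller half‑cylinder. (Alternatively this follows from a compactness/blow‑up argument, reducing to a Liouville theorem for entire solutions of $\mathcal L_\nu U=0$ of polynomial growth, obtained by separation of variables with Hermite modes in $x'$ and Laguerre‑type modes in $s$.) With the model estimate available, on a small cylinder $Q^*_r(p_0)$ the operator $L_+$ differs from its frozen model by an operator of $C^0$‑norm $O(r^{\alpha})$ thanks to the $C_\mu^{\alpha}$ continuity of $a_{ij},b_i$; writing $\mathcal L_\nu U=(\mathcal L_\nu-L_+)U+f$ and iterating the model estimate over the dyadic cylinders $Q^*_{2^{-k}r}(p_0)$ in Campanato fashion gives the $C_\mu^{2+\alpha}$ bound at $p_0$, uniformly for $p_0\in\{x_n=0\}$. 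Away from the degeneracy, on a Whitney cube where $x_n\approx d$ the parabolic rescaling $(x,t)\mapsto((x-x_0)/\sqrt d,(t-t_0)/d)$ turns the equation into a uniformly parabolic one with controlled $C^{\alpha}$ data, so the classical interior Schauder estimate applies; rescaling back and invoking the definition \eqref{1.15} of the $C_\mu^{2+\alpha}$ norm converts these into the remaining interior bounds, and summing over the covering completes the proof on $Q^*_{\rho'}$.

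\emph{Main obstacle.} The heart of the argument is Step (ii): establishing the sharp $C_\mu^{2+\alpha}$ estimate for the Bessel‑type heat operator $\mathcal L_\nu$ and, in particular, checking that the degenerate weighted quantities $x_nU_{nn}$ and $\sqrt{x_n}U_{ni}$ transform correctly under the metric $\mu$, so that $C^{\alpha}$ regularity of $U$ in the $(x',s)$‑picture is faithfully captured by the norm \eqref{1.15}. The reduction and the perturbation/gluing are, by comparison, routine; and of course one may instead appeal directly to \cite{DH1999}, where precisely this estimate was proved.
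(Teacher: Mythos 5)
This lemma is not proved in the paper at all: it is quoted verbatim as a known Schauder estimate, with the proof delegated to Daskalopoulos--Hamilton \cite{DH1999} (see also \cite{DL2004}), so there is no in-paper argument to compare against line by line. Your outline is, in substance, a reconstruction of the strategy behind that citation: pass to $s=2\sqrt{x_n}$ (your computations $x_n\partial_{nn}=\partial_{ss}-\tfrac1s\partial_s$, $\sqrt{x_n}\partial_{in}=\partial_{is}$, and $\nu=\tfrac{2b_n}{a_{nn}}-1\ge 2\lambda^2-1>-1$ are correct, and $b_n\ge\lambda$ at $\{x_n=0\}$ is indeed exactly what keeps the Bessel parameter admissible), prove a sharp estimate for the frozen model, then perturb and glue with interior Schauder on Whitney cubes where $x_n\approx d$. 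Two caveats. First, the reduction of the frozen operator to the clean product model $\mathcal L_\nu=-\partial_t+\Delta_{x'}+\partial_{ss}+\tfrac{\nu}{s}\partial_s$ is thinner than ``an affine change of the tangential variables'': removing the constant mixed terms $2a_{in}(p_0)\partial_{is}$ by a shear $x_i\mapsto x_i-c_is$ turns the drift $\tfrac{\nu}{s}\partial_s$ into $\tfrac{\nu}{s}(\partial_s-c_i\partial_{x_i})$, creating singular tangential drifts outside the class $L_+$; the cited theory avoids this by proving the model estimate for the full constant-coefficient degenerate operator (mixed terms included) via its explicit kernel, and your write-up should either do the same or justify the normalization differently. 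Second, as you yourself flag, Step (ii) --- the kernel (or Liouville/compactness) estimate in the weighted norm \eqref{1.15}, in particular that $U_n$ and $x_nU_{nn}$ are $\mu$-H\"older up to $\{x_n=0\}$, which uses $b_n\ge\lambda>0$ quantitatively and not merely $\nu>-1$ --- is the entire content of the cited theorem and is only asserted here. So the proposal is a sound sketch of the standard proof, equivalent in spirit to the reference the paper relies on, but as written it defers the essential analytic work either to the kernel computation or, as the paper does, to \cite{DH1999}.
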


\vspace{3mm}
\section{The regularity for the graph}\label{s7}
\vspace{3mm}

In this section, we will first derive the evolution equations of $g$ and $h$. Then, we utilize the a priori estimates of $u$ and $\zeta$ to obtain the $C^{2+\beta}_\mu$ regularity of $g$ and $h$,  which allows us to prove Theorem \ref{thm-g} and Corollary \ref{cor-v}.

\subsection{\bf Derivation of equations.}
Recall that the function $v$ satisfies equation \eqref{v} and $g$ is defined in \eqref{def-g}.
A direct computation yields that,
for $1\le i,j \le n$,
\begin{equation}\label{v-g-1}
v_i= g^{\frac{1}{\sigma_p}} g_i,  \ \   v_t= g^{\frac{1}{\sigma_p}} g_t,
\end{equation}
and
\begin{equation}\label{v-g-2}
v_{ij}= g^{\frac{1}{\sigma_p}} g_{ij} + \frac{1}{\sigma_p}g^{\frac{1}{\sigma_p}-1} g_{i}g_j.
\end{equation}
Then, $g$ satisfies
\begin{equation}\label{g-eq}
g_t=\frac{\left(g \det \big(D^2 g + \frac1{\sigma_p}g^{-1} Dg\otimes Dg\big)\right)^p}{\big(1+g^{\frac2{\sigma_p}}|D g|^2\big)^{\frac {(n+2)p-1}{2}}}.
\end{equation}
Here $Dg\otimes Dg$ is a matrix with $(i,j)$-entries $g_ig_j$.

Then, we perform the following Hodograph transformation mentioned in Section \ref{intro}.
Let $\bar p_0 = (\bar y_0, \bar t_0)$ be a point on the interface $\Gamma_{\bar t_0}$ with $0 < \bar t_0 < T^*$.
By a rotation of the coordinates,
we may assume that $e_n=(0, \cdots, 0, 1)$ is the unit outer normal of the flat part $\{v(y,\bar t_0) = 0\}$ at $\bar p_0$, so that at the point, we have
$$
g_i(\bar p_0)=0,  \ \   i=1,\cdots,n-1,  \ \  \text{and}  \ \   g_n(\bar p_0)>0.
$$
The above condition can be guaranteed by the initial conditions on $g(y,0)$ and later by the a priori estimates on $|Dg|$ (see \eqref{gtgx-est} below).  Hence, we can solve the equation $y_{n+1}=g(y_1,\cdots,y_{n},t)$ with respect to $y_n$ around the point $\bar p_0$ and yield a map
\begin{equation}\label{gtoh}
y_n=-h(y_1,\cdots,y_{n-1},y_{n+1},t),
\end{equation}
defined for all $(y_1,\cdots,y_{n-1},y_{n+1},t)$ sufficiently close to $\bar q_0=(\bar y_{0,1},\cdots,\bar y_{0,n-1},0,\bar t_0)$.
Then, direct computations give that
\begin{equation}\label{gtoh-1}
Dg= -\frac{1}{h_{n+1}}(h_1,\cdots,h_{n-1}, 1),  \ \   g_t= - \frac{h_t}{h_{n+1}}
\end{equation}
and for $1\le i,j \le n-1$,
$$
g_{ij}= -\frac{1}{h_{n+1}}\big(h_{ij}- h_{i,n+1}\frac{h_j}{h_{n+1}} -h_{j,n+1}\frac{h_i}{h_{n+1}} +h_{n+1,n+1}\frac{h_ih_j}{h^2_{n+1}}\big),
$$
$$
g_{in}= \frac{1}{h^2_{n+1}}\big(h_{i,n+1} - h_{n+1,n+1} \frac{h_i}{h_{n+1}}\big)
$$
and
$$
g_{nn}= -\frac{1}{h^3_{n+1}}h_{n+1,n+1}.
$$
According to \eqref{g-eq}, $h$ satisfies
\begin{equation}\label{h-eq1}
h_t=\frac{\left(y_{n+1} \det H\right)^p}{\big(h_{n+1}^2 + y_{n+1}^{\frac2{\sigma_p}}(1+ h_1^2 +\cdots + h_{n-1}^2)\big)^{\frac {(n+2)p-1}{2}}},
\end{equation}
where $H$ is an $n\times n$ matrix with entries
$$H_{ij}=h_{ij}- h_{i,n+1}\frac{h_j}{h_{n+1}} - h_{j,n+1}\frac{h_i}{h_{n+1}} +h_{n+1,n+1}\frac{h_ih_j}{h^2_{n+1}} - \frac{\sigma_p^{-1}
h_ih_j}{ y_{n+1}h_{n+1}},$$
$$H_{i,n+1}=h_{i,n+1} - h_{n+1,n+1} \frac{h_i}{h_{n+1}} + \frac{\sigma_p^{-1}h_i}{y_{n+1}}$$
and
$$H_{n+1,n+1}=h_{n+1,n+1} - \frac{\sigma_p^{-1}h_{n+1}}{y_{n+1}}. $$
By the elementary properties of determinants, equation \eqref{h-eq1} can be transformed into
\begin{equation}\label{h-eq2}
h_t=\frac{\left(\det \widetilde H\right)^p}{\big(h_{n+1}^2 + y_{n+1}^{\frac2{\sigma_p}}(1+ |D_{y'}h|^2)\big)^{\frac {(n+2)p-1}{2}}}
\end{equation}
where
\begin{equation}\label{tH-1}
\widetilde H=\begin{pmatrix}
 h_{11} & \cdots& h_{1,n-1}&  \sqrt{y_{n+1}}h_{1,n+1}\\[3pt]
   \cdots &\cdots &\cdots &\cdots \\[3pt]
h_{1,n-1} & \cdots& h_{n-1,n-1}&\sqrt{y_{n+1}} h_{n-1,n+1} \\[3pt]
  \sqrt{y_{n+1}} h_{1,n+1}  & \cdots &   \sqrt{y_{n+1}} h_{n-1,n+1} & y_{n+1} h_{n+1,n+1} - \sigma_p^{-1} h_{n+1}
 \end{pmatrix}.
\end{equation}
One can calculate that the linearized operator of \eqref{h-eq2},
\begin{equation}\label{lin-7}
\begin{split}
\mathcal L := &-\frac{1}{h_t}\partial_t+ {\Small\text{$\sum_{i,j=1}^{n-1}$}} p \widetilde H^{ij}\partial_{y_iy_j}+2 {\Small\text{$\sum_{i=1}^{n-1}$}} p\widetilde H^{i,n+1}\sqrt{y_{n+1}}\partial_{y_iy_{n+1}}+py_{n+1}\widetilde H^{n+1,n+1} \partial_{y_{n+1}y_{n+1}}\\
&+ \hat b \partial_{y_{n+1}}- {\Small\text{$\sum_{i=1}^{n-1}$}} \frac{[(n+2)p-1] y_{n+1}^{\frac{2}{\sigma_p}}}{h_{n+1}^2+ y_{n+1}^{\frac{2}{\sigma_p}}(1+|D_{y'}h|^2)}~ h_i \partial_{y_i},
\end{split}
\end{equation}
where $\widetilde H^{ij}, \widetilde H^{i,n+1},\widetilde H^{n+1,n+1}$,
$i, j=1,\cdots,n-1$ are the elements of the inverse matrix of $\widetilde H$ and
\begin{equation}\label{hat-b}
\hat b (y',y_{n+1},t) := -\Big(\frac{[(n+2)p-1]h_{n+1}}{h_{n+1}^2+ y_{n+1}^{\frac{2}{\sigma_p}}(1+|D_{y'}h|^2)}+\frac{p}{\sigma_p}\widetilde H^{n+1,n+1}\Big).
\end{equation}

\subsection{\bf Regularity for $g$ and $h$}

\begin{lemma}\label{Lemma-3.1}
Assume the conditions (I1)-(I4). Then for some $\beta\in(0,1)$, it holds that
\begin{equation}\label{g-c2}
\max_{t\in[\sigma,T]}\|g(\cdot,t)\|_{C_\mu^{2+\beta}(\overline{\{0<g(\cdot,t) <1\}})} \le C(\mathcal M_0,n,p,\sigma,T),
\end{equation}
for all $0<\sigma<T < T^*$.
\end{lemma}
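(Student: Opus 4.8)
The plan is to work locally near an arbitrary interface point $\bar p_0=(\bar y_0,\bar t_0)\in\Gamma_{\bar t_0}$, $0<\bar t_0<T^*$, pass to the Hodograph function $h$ from \eqref{gtoh}, and treat its evolution equation \eqref{h-eq2} as a degenerate \emph{linear} parabolic equation of the form \eqref{l++} with $y_{n+1}$ playing the role of the degenerate variable $x_n$; the estimate \eqref{g-c2} will then follow by combining the degenerate De Giorgi--Nash type estimate (Lemma \ref{Lemma-2.3}) and the degenerate Schauder estimate (Lemma \ref{Lemma-2.4}) with the inverse of \eqref{gtoh} and the algebraic relation \eqref{def-g}. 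Away from a fixed neighbourhood of $\Gamma_t$ the height equation \eqref{v} is uniformly parabolic and standard interior parabolic Schauder theory gives smoothness, so only a neighbourhood of $\Gamma_{\bar t_0}$ requires attention, and a covering argument in $(y,t)\in\overline{\{0<v<1\}}\times[\sigma,T]$ reduces \eqref{g-c2} to a uniform estimate on one such neighbourhood.

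First I would transfer the a priori estimates of Section \ref{s2} to $g$ and $h$. Since $v(\cdot,t)$ is the Legendre transform of $u(\cdot,t)$ and $\zeta(\theta,s,t)=u(\theta,r,t)/r$ with $s=r^{\sigma_p/2}$, Lemma \ref{lemma-u} yields $|Dv|\approx\mathrm{dist}(y,\Gamma_t)^{1/\sigma_p}$, hence by \eqref{def-g} the non-degeneracy $|Dg|\approx 1$ near $\Gamma_t$ together with uniform convexity of the level sets of $g$ (using (I1)). In particular, after rotating so that $g_i(\bar p_0)=0$ for $i<n$ and $g_n(\bar p_0)>0$, the map \eqref{gtoh} is a diffeomorphism on a cylinder $Q^*_\rho(\bar q_0)$ of radius $\rho$ depending only on $\mathcal M_0,n,p,\sigma,T$, on which $|h_{n+1}|\approx 1$, $h_{n+1}<0$, $\det\widetilde H\approx 1$, and $(h_{ij})_{i,j<n}$ is uniformly positive definite. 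Moreover, because $s=r^{\sigma_p/2}$ scales like $\sqrt{\mathrm{dist}(\cdot,\Gamma_t)}$, i.e.\ like the variable $\sqrt{x_n}$ appearing in the metric $\mu$, unwinding the bound $\|\zeta\|_{C^{2+\alpha_0}}\le C$ of Lemma \ref{lemma-zeta} through the Legendre transform and \eqref{def-g} provides a baseline bound $\|h(\cdot,t)\|_{C^{2+\alpha_1}_\mu(\overline{Q^*_\rho})}\le C$ for some $\alpha_1\in(0,1)$; equivalently, the quantities $h$, $Dh$, $h_t$, $h_{ij}$, $\sqrt{y_{n+1}}\,h_{i,n+1}$, $y_{n+1}h_{n+1,n+1}$ — which, up to lower order terms, are exactly the entries of $\widetilde H$ in \eqref{tH-1} — are bounded in $C^{\alpha_1}_\mu(\overline{Q^*_\rho})$.

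Next I would check that on $Q^*_\rho$ the equation \eqref{h-eq2} meets the hypotheses of Lemmas \ref{Lemma-2.3} and \ref{Lemma-2.4}. By the previous step the coefficients $p\widetilde H^{ij}$, $p\widetilde H^{i,n+1}$, $p\widetilde H^{n+1,n+1}$ of $\mathcal L$ in \eqref{lin-7} are bounded and $(\widetilde H^{ij})_{i,j<n}$ is uniformly elliptic, while the coefficients of $\partial_{y_i}$ $(i<n)$ are bounded and vanish on $\{y_{n+1}=0\}$. The crucial structural point is the sign of the drift coefficient $\hat b$ of $\partial_{y_{n+1}}$: using $h_{n+1}<0$ and the limits $\det\widetilde H\to-\sigma_p^{-1}h_{n+1}\det(h_{ij})_{i,j<n}$, $\widetilde H^{n+1,n+1}\to\sigma_p/|h_{n+1}|$ as $y_{n+1}\to 0$, one computes from \eqref{hat-b} that $\hat b=\big((n+1)p-1\big)/|h_{n+1}|$ on $\{y_{n+1}=0\}$, which is positive since $p>\tfrac1n>\tfrac1{n+1}$. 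Hence $2\hat b/(p\widetilde H^{n+1,n+1})$ is bounded below by a positive constant on $\{y_{n+1}=0\}$, and by continuity the degeneracy/drift conditions of Lemmas \ref{Lemma-2.3}--\ref{Lemma-2.4} hold on a slightly smaller cylinder $Q^*_{\rho'}$.

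Finally I would run the bootstrap. Applying difference quotients in the tangential directions $y_i$ $(i<n)$ and in $t$ to \eqref{h-eq2}, each such quotient $w$ solves an equation of the form $L_+w=f$ (cf.\ \eqref{l++}, \eqref{lin-7}) with bounded measurable coefficients and with $f$ bounded in terms of the quantities already controlled above, so Lemma \ref{Lemma-2.3} yields $h_t,h_i\in C^{\alpha_2}_\mu(\overline{Q^*_{\rho'}})$ for some $\alpha_2\in(0,1)$, uniformly in the quotient parameter. Feeding this back into \eqref{h-eq2}, which prescribes $\det\widetilde H$ in terms of $h_t$ and the bounded lower order terms, and combining with the tangential second derivatives, upgrades the remaining second order quantities $h_{ij}$, $\sqrt{y_{n+1}}h_{i,n+1}$, $y_{n+1}h_{n+1,n+1}$ to $C^{\alpha_2}_\mu$, so that all coefficients of $\mathcal L$ now lie in $C^{\alpha_2}_\mu$; since the baseline bound makes $h(\cdot,t)$ a priori of the class required by Lemma \ref{Lemma-2.4}, the Schauder estimate then gives $\|h(\cdot,t)\|_{C^{2+\beta}_\mu(\overline{Q^*_{\rho''}})}\le C$ for some $\beta\in(0,1)$, and transferring back through the inverse of \eqref{gtoh} and \eqref{def-g} produces \eqref{g-c2}. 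I expect the main obstacle to be this transfer step: converting the estimates for $u$ and $\zeta$, which live on the Legendre-dual side in the rescaled spherical variables $(\theta,s)$, into the baseline $C^{2+\alpha_1}_\mu$ control of $h$, since one must simultaneously track the Legendre transform — whose Hessian degenerates along $\Gamma_t$ — the rescaling $s=r^{\sigma_p/2}$, and the $\mu$-metric, and verify that $|Dg|\approx 1$ and level-set convexity persist. A secondary difficulty is that \eqref{h-eq2} prescribes only the product $\det\widetilde H$, so extracting individual $C^{\alpha}_\mu$ bounds for $y_{n+1}h_{n+1,n+1}$ and $\sqrt{y_{n+1}}h_{i,n+1}$ genuinely requires the $L_+$-structure and the sign of $\hat b$ above rather than a formal appeal to parabolic Schauder theory; this is also the point at which the coefficient $y_{n+1}^{2/\sigma_p}=(\sqrt{y_{n+1}})^{4/\sigma_p}$, being only $\mu$-Hölder of order $\min\{1,4/\sigma_p\}$, forces $\beta$ to be non-explicit and motivates the use of the estimates of \cite{HTW}.
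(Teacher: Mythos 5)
Your skeleton---Hodograph transform, the degenerate operator $\mathcal L$, the sign of $\hat b$ at $\{y_{n+1}=0\}$, and the use of Lemmas \ref{Lemma-2.3} and \ref{Lemma-2.4}---matches the paper, but there is a genuine gap at exactly the point you flag as ``the main obstacle'' and then assume: the claimed baseline bound $\|h\|_{C^{2+\alpha_1}_\mu(\overline{Q^*_\rho})}\le C$ obtained by ``unwinding'' $\|\zeta\|_{C^{2+\alpha_0}}\le C$ through the Legendre transform. No such transfer is justified (nor is it made in the paper): $\zeta$ lives on the dual side in the variables $(\theta,s)$, and its $C^{2+\alpha_0}$ bound does not convert into $\mu$-H\"older control of the weighted second derivatives of $h$; a bound of that strength is essentially the conclusion of the lemma itself, so assuming it begs the question. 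What the paper actually extracts is much weaker and requires separate arguments: (i) the $L^\infty$ comparability $G\approx I$, $\widetilde H\approx I$, proved by comparing the level-set frame $\xi^{(1)},\dots,\xi^{(n)}$ with the eigenvector $\nu$ of the smallest eigenvalue of $D^2u$ (the Claim $|\xi^{(n)}\cdot\nu-1|\lesssim r^{\sigma_p}$, $|\xi^{(i)}\cdot\nu|\lesssim r^{\sigma_p/2}$) together with Lemma \ref{lemma-u}; and (ii) from $\zeta\in C^{2+\alpha_0}$ only the radial expansion $u_{\xi^{(n)}\xi^{(n)}}=r^{\sigma_p-1}\big(a_0+O(r^{\alpha_0\sigma_p/2})\big)$, which yields the pointwise decay \eqref{ggrr}--\eqref{ggrN} and hence \eqref{hnn}; these, combined with the scaled interior estimate $|h_{t,n+1}|\lesssim y_{n+1}^{-1}$, give $h_{n+1}\in C^{\beta}_\mu$ by direct integration, and only then does Lemma \ref{Lemma-2.3} give $h_i,h_t\in C^{\beta}_\mu$. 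Your proposal contains no substitute for (i) or (ii); note also that evaluating $\widetilde H^{n+1,n+1}$ on $\{y_{n+1}=0\}$ (hence the drift bound $\hat b\gtrsim1$ needed for Lemma \ref{Lemma-2.3}) already uses the decay \eqref{hnn}, not mere boundedness of $y_{n+1}h_{n+1,n+1}$ and $\sqrt{y_{n+1}}h_{i,n+1}$, so even your verification of the structural hypotheses rests on the missing step.

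A second, related gap is the concluding bootstrap. Knowing $h_i,h_t\in C^{\alpha_2}_\mu$ does not ``upgrade the remaining second order quantities'': the equation \eqref{h-eq2} controls only $\det\widetilde H$, and $h_{ij}$ is not $\mu$-H\"older merely because $h_i$ is, so the coefficients of $\mathcal L$ are not yet in $C^{\alpha_2}_\mu$ and Lemma \ref{Lemma-2.4} cannot be invoked at that stage---as written, the argument is circular. The paper closes this step not by Schauder theory but by the separate argument of Lemma 4.4 in \cite{HWZ} (or Section 6 of \cite{DL2004}), which produces $h\in C^{2+\beta}_\mu$ from the H\"older continuity of $h_{n+1},h_i,h_t$ and the structure of the equation; the Schauder Lemma \ref{Lemma-2.4} is used only afterwards, in the proof of Theorem \ref{thm-g}, once $C^{\beta}_\mu$ coefficients are available. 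To repair your proof you would need to supply the eigenframe comparison and decay estimates (i)--(ii) and replace the final Schauder appeal by an argument of the type cited above.
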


\begin{proof}
{\bf Step 1: First order derivative estimates}

By \eqref{def-u} and \eqref{urr2}, we have
\begin{equation*}
{\begin{split}
v &=x\cdot D u-u=ru_r -u \\
  &= \int_0^r (\rho u_\rho -u)_\rho d\rho
      =\int_0^r \rho u_{\rho\rho}d\rho\approx r^{\sigma_p+1} .
   \end{split}}
\end{equation*}
Hence $g \approx r^{\sigma_p}$.
It follows from  \eqref{v-g-1} and Lemma \ref{lemma-u} that
\begin{equation}\label{gtgx-est}
g_t\approx 1, \ \   |D g|\approx 1,
\end{equation}
uniformly near the interface $\Gamma_t$ for $t\in(0,T]$.
Note that estimate \eqref{gtgx-est} allows us to perform the local coordinate transform \eqref{gtoh}.

Fix a point $\bar p_0=(\bar y_0, \bar t_0)$ on the interface $\Gamma_{\bar t_0}$, where $\bar t_0 \in (0,T]$.
By a rotation of the coordinates,
we may assume that the unit outer normal of the flat part $\{v(y,\bar t_0) = 0\}$ at $\bar p_0$ is $e_n=(0, \cdots, 0, 1)$, i.e., $\frac{Dg(\bar p_0)}{|Dg(\bar p_0)|}=e_n$. 
By Lemmas 4.7 and 4.8 in \cite{DL2004}, there exist positive constants $\eta> 0$ and $\gamma_0> 0$, depending only on the initial data and $\rho_0$, such that
\begin{equation}\label{kgg-1}
e_n \cdot \frac{Dg( p)}{|Dg(p)|} \ge \gamma_0, \quad \forall~ p=(y,t) \ \ \text{with} \ \ g(p)>0 \ \ \text{and}  \ \ |p-\bar p_0| < \eta, t\in(0,\bar t_0].
\end{equation}

From \eqref{gtoh-1}, \eqref{gtgx-est} and \eqref{kgg-1}, one knows, for small constant $\eta>0$,
\begin{equation}\label{h-dydt}
h_t \approx 1, \quad -h_{n+1} \approx \sqrt{1+ |D_{y'}h|^2} \approx 1, \quad \forall~(y',y_{n+1},t)\in Q^*_\eta(\bar q_0),
\end{equation}
where the cylinder $Q^*_\eta$ is defined in \eqref{cylinder}.

{\bf Step 2: Second order derivative estimates}

Now, we fix a point $(y_0,t_0) \in \{(y, t)~|~  g(y,t)>0,  |(y,t)-(\bar y_0, \bar t_0)| < \eta, \bar t_0 - \eta^2 <t\le \bar t_0\}$ for small constant $\eta>0$.
Let ${\xi}^{(1)},\cdots,{\xi}^{(n-1)}, {\xi^{(n)}}=\frac{D g}{|D g|}$ be $n$ vectors at the point $(y_0,t_0)$ with ${\xi}^{(i)}=\frac{g_ne_i-g_ie_n}{|g_ne_i-g_ie_n|}$, $i=1,\cdots,n-1$. Note that ${\xi}^{(i)} \bot {\xi^{(n)}}$ for $i=1,\cdots,n-1$.
From \eqref{kgg-1}, one gets
\begin{equation}\label{0425-1}
\det ({\xi}^{(1)},\cdots,{\xi}^{(n-1)},{\xi^{(n)}})
\ge \Big(\frac{g_n}{|Dg|}\Big)^{n-2} \approx 1. 
\end{equation}

At point $(y_0,t_0)$, consider the following matrix
\begin{equation}\label{G1}
G=\begin{pmatrix}
   g_{{\xi}^{(1)}{\xi}^{(1)}}&   \cdots  &    g_{{\xi}^{(1)}{\xi}^{(n-1)}} & \sqrt{g}   g_{{\xi}^{(1)}{\xi^{(n)}}} \\[3pt]
   g_{{\xi}^{(2)}{\xi}^{(1)}} & \cdots&g_{{\xi}^{(2)}{\xi}^{(n-1)}} & \sqrt{g} g_{{\xi}^{(2)}{\xi^{(n)}}}\\[3pt]
   \cdots &\cdots &\cdots &\cdots \\[3pt]
 \sqrt{g}   g_{{\xi}^{(n)}{\xi^{(1)}}} &\cdots & \sqrt{g}   g_{{\xi}^{(n)}{\xi^{(n-1)}}}  & g g_{{\xi^{(n)}}{\xi^{(n)}}}+\frac{1}{\sigma_p}|Dg|^2
 \end{pmatrix}.
\end{equation}
Then, \eqref{0425-1} gives that
\begin{equation}\label{kgg-3}
\begin{split}
\det G & =g \det \big(D^2 g + \frac1{\sigma_p}g^{-1} Dg\otimes Dg\big)(\det ({\xi}^{(1)},\cdots,{\xi}^{(n-1)},{\xi^{(n)}}))^2 \\
& \approx g \det \big(D^2 g + \frac1{\sigma_p}g^{-1} Dg\otimes Dg\big).
\end{split}
\end{equation}

The first aim in this part is to show that
\begin{equation}\label{GI}
G\approx I_{n\times n},
\end{equation}
where $I_{n\times n}$ is the identity matrix.
For this,
let  ${\nu}$ be the unit eigenvector which corresponds to the smallest eigenvalue of $D^2 u$.
Recall that $x_0=D_y v(y_0,t_0)$.  
Then $r =|x_0|$ can be arbitrary small if we take $\eta>0$ small.
\begin{equation*}
\text{Claim:} \ \   \ \   \ \   |{\xi^{(n)}}\cdot {\nu}-1| \lesssim r^{\sigma_p} , \ \   |{\xi}^{(i)}\cdot {\nu}|\lesssim r^{\frac{\sigma_p}{2}}, \ \   i=1,\cdots, n-1.  \ \   \ \   \ \   \ \   \ \  \,
\end{equation*}
By the estimate \eqref{urr2} and
$$\xi^{(n)}= \frac{Dg(y_0,t_0)}{|Dg(y_0,t_0)|} = \frac{Dv(y_0,t_0)}{|Dv(y_0,t_0)|} = \frac{x_0}{|x_0|} = \frac{x_0}{r},$$
we have  $u_{{\xi^{(n)}}{\xi^{(n)}}}=u_{rr} \approx r^{\sigma_p-1}$ 
at the point $(x_0,t_0)$ \cite{HWZ}.
Suppose the first part of the Claim fails, i.e. $|{\xi^{(n)}}\cdot {\nu}-1| >> r^{\sigma_p} $. Denote
\begin{equation*}
\begin{split}
{\xi^{(n)}}=\tau_1 {\nu}+ \tau_2{\bar{\xi}}, \quad \text{for~some~unit~vector~}   {\bar{\xi}} \perp {\nu},
\end{split}
\end{equation*}
where $\tau_1={\xi^{(n)}}\cdot \nu$ and $\tau_2={\xi^{(n)}}\cdot \bar\xi$.
Then, it holds
\begin{equation*}
\begin{split}
 \big||\tau_1|-1\big|>> r^{\sigma_p} , \ \   \tau_2 >> r^{\frac{\sigma_p}{2}} ,
\end{split}
\end{equation*}
which yields
\begin{equation*}
\begin{split}
r^{\sigma_p-1} \approx u_{{\xi^{(n)}}{\xi^{(n)}}}&=\tau_1^2 u_{{\nu}{\nu}}+2\tau_1\tau_2 u_{{\nu}{\bar{\xi}}}+\tau_2^2 u_{{\bar{\xi}}{\bar{\xi}}}\\
&\ge (\tau_2\sqrt {u_{{\bar{\xi}}{\bar{\xi}}}}-|\tau_1| \sqrt {u_{{\nu}{\nu}} })^2\\
& >> r^{\sigma_p-1}.
\end{split}
\end{equation*}
 This contradiction proves the first part of the Claim.

 It then follows that
 \begin{equation*}
 |{\xi^{(n)}}-{\nu}|^2=2|{\xi^{(n)}}\cdot {\nu}-1|\lesssim r^{\sigma_p}.
 \end{equation*}
 Hence
 \begin{equation*}
 |{\xi}^{(i)}\cdot {\nu}|=|{\xi}^{(i)}\cdot({\xi^{(n)}}- {\nu})|\lesssim r^{\frac{\sigma_p}{2}}, \quad  i=1,\cdots, n-1,
 \end{equation*}
which proves the second part of the Claim.

By the above Claim and Lemma \ref{lemma-u}, for $\bar{\xi}\perp \nu$, we get
\begin{equation*}
\begin{split}
g_{{\xi}^{(i)}{\xi}^{(i)}}& = \left(\frac{1+\sigma_p}{\sigma_p}v\right)^{-\frac{1}{1+\sigma_p}} v_{{\xi}^{(i)}{\xi}^{(i)}}
 \approx  r^{-1}  u^{{\xi}^{(i)}{\xi}^{(i)}}\\
 &= r^{-1}\Big(u^{{\bar{\xi}}{\bar{\xi}}}( {\bar{\xi}}\cdot {\xi}^{(i)})^2+2u^{{\bar{\xi}}{ \nu}}({\bar{\xi}}\cdot {\xi}^{(i)})({\nu}\cdot {\xi}^{(i)})+u^{{\nu}{\nu}} ({\nu}\cdot {\xi}^{(i)})^2 \Big) \\
& \lesssim  r^{-1}\left(r+  r^{1-\frac{\sigma_p}{2}} \cdot r^{\frac{\sigma_p}{2}}+r^{1-\sigma_p}r^{\sigma_p}\right) \lesssim 1,
\end{split}
\end{equation*}
for $i=1,\cdots, n-1$, and
\begin{equation*}
\begin{split}
 g g_{{\xi^{(n)}}{\xi^{(n)}}}+\frac{1}{\sigma_p}|Dg|^2
   & =g \left(\frac{1+\sigma_p}{\sigma_p}v\right)^{-\frac{1}{1+\sigma_p}}v_{{\xi^{(n)}}{\xi^{(n)}}}\\
  & \approx  r^{\sigma_p-1}u^{{\xi^{(n)}}{\xi^{(n)}}}\\
  & \lesssim r^{\sigma_p-1} r^{1-\sigma_p}\approx 1.
\end{split}
\end{equation*}
Then from equation \eqref{g-eq} and estimates \eqref{gtgx-est}, \eqref{kgg-3}, one knows $G\approx I_{n\times n}$.

We next claim that the matrix $\widetilde H$,  defined in \eqref{tH-1},  satisfies
\begin{equation}\label{HI}
\widetilde H\approx I_{n\times n} \quad\text{in}\  Q^*_\eta(\bar q_0) ,
\end{equation}
where $\bar q_0=(\bar y_0',0,\bar t_0)$.
Indeed, by the definition of $h$ , one has
$$
{\begin{split}
{\xi}^{(i)} & =\frac{e_i-h_ie_n}{\sqrt{1+h_i^2}}, \ \ i=1,\cdots,n-1, \\
{\xi^{(n)}} & =\frac{(D_{y'}h,1)}{\sqrt{1+|D_{y'}h|^2}}\\
\end{split}}
$$
at the point $(y_0', g(y_0,t_0),t_0)$.
Then a direct computation implies
\begin{equation}\label{g-2tn}
\begin{split}
& g_{{\xi}^{(i)}{\xi}^{(j)}}= -\frac{h_{ij}}{h_{n+1}\sqrt{(1+h_i^2)(1+h_j^2)}}, \ \   1\le i,j\le n-1,\\
& g_{{\xi}^{(i)}{\xi^{(n)}}}=\frac{1}{\sqrt{(1+h_i^2)(1+|D_{y'}h|^2)}} \left(-\frac{h_kh_{ik}}{h_{n+1}}+\frac{1+|D_{y'}h|^2}{h_{n+1}^2}h_{i,n+1}\right),\	 1\le i\le n-1,\\
& g_{\xi^{(n)} \xi^{(n)}}= -\frac{h_kh_lh_{kl}}{h_{n+1}(1+|D_{y'}h|^2)} + \frac{2h_l h_{l,n+1}}{h_{n+1}^2}-\frac{(1+|D_{y'}h|^2) h_{n+1,n+1}}{h_{n+1}^3}.
\end{split}
\end{equation}
Here the subscripts $k,l$ obey the Einstein summation convention from $1$ to $n-1$.
From \eqref{h-dydt}, \eqref{GI} and \eqref{g-2tn}  it follows that
\begin{equation}
\sum_{i,j=1}^{n-1}(|h_{ij}|+|\sqrt{y_{n+1}}h_{i,n+1}|)+|y_{n+1}h_{n+1,n+1}|\lesssim 1,
\end{equation}
which gives \eqref{HI} by \eqref{h-eq2} and the arbitrariness of $(y_0,t_0)$.

{\bf Step 3: $C^{2+\beta}_{\mu}$-estimate}

Now we refine the estimates of $g g_{{\xi^{(n)}}{\xi^{(n)}}}$ and $\sqrt{g}   g_{{\xi}^{(i)}{\xi^{(n)}}}$
according to the regularity of $\zeta$.
By Lemma \ref{lemma-zeta},
$\zeta(\theta,s,t)\in C^{2+\alpha_0} (\S^{n-1}\times[0,1] \times(0,T])$.
At the point $(x_0,t_0) = (D_yv(y_0,t_0),t_0)$,
where $(y_0,t_0)$ is a fixed point in $\{(y, t)~|~  g(y,t)>0,  |(y,t)-(\bar y_0, \bar t_0)| < \eta, \bar t_0 - \eta^2 <t\le \bar t_0\}$ for small constant $\eta>0$, one knows that
\begin{equation}\label{u-nn}
{\begin{split}
   u_{{\xi^{(n)}}{\xi^{(n)}}} & =r^{\sigma_p-1}\Big(a_0+ O(r^{\frac{\alpha_0\sigma_p}{2}})\Big), \\
  |u_{{\bar\xi}^{(i)}{\xi^{(n)}}}| & \lesssim r^{\sigma_p-1}, \ \ i=1,\cdots, n-1,
  \end{split}}
\end{equation}
 where $\bar\xi^{(n)} := \xi^{(n)}$, $\{\bar\xi^{(i)}\}_{i=1}^n$ is an orthonormal basis of $\R^n$ 
and $$a_0 := \lim_{\rho\rightarrow 0^+}\frac{u_{\rho\rho}\big(\frac{\rho x_0}{|x_0|},t_0 \big)}{\rho^{\sigma_p-1}}.$$
Then, by Lemma \ref{lemma-u} and \eqref{u-nn}, we get
\begin{equation}\label{u--nn}
\begin{split}
u^{{\xi^{(n)}}{\xi^{(n)}}}=\frac{U^{{\xi^{(n)}}{\xi^{(n)}}}}{\det D^2 u}& =\frac{U^{{\xi^{(n)}}{\xi^{(n)}}}}{u_{{\xi^{(n)}}{\xi^{(n)}}} U^{{\xi^{(n)}}{\xi^{(n)}}}+O(u_{{\bar\xi}^{(i)}{\xi^{(n)}}} u_{{\bar\xi}^{(j)}{\xi^{(n)}}}  r^{-(n-2)})} \\
&= \frac{1}{u_{{\xi^{(n)}}{\xi^{(n)}}}} \left[1+O\left(\frac{u_{{\bar\xi}^{(i)}{\xi^{(n)}}} u_{{\bar\xi}^{(j)}{\xi^{(n)}}}  r^{-(n-2)}}{u_{{\xi^{(n)}}{\xi^{(n)}}} U^{{\xi^{(n)}}{\xi^{(n)}}}}\right) \right]^{-1} \\
&=\frac{1}{u_{{\xi^{(n)}}{\xi^{(n)}}} } \left[1+O\left(\frac{ r^{2\sigma_p-n}}{u_{{\xi^{(n)}}{\xi^{(n)}}} u^{{\xi^{(n)}}{\xi^{(n)}}} \det D^2u}\right) \right]^{-1}.
\end{split}
\end{equation}
Here we denote by $U^{{\bar\xi^{(i)}}{\bar\xi^{(j)}}}$  the elements of the adjoint matrix of $\{u_{{\bar\xi}^{(i)}{\bar\xi}^{(j)}}\}_{i,j=1}^n$.

Since
$$
{\begin{split}
\det D^2u & \approx (-u_t)^{-\frac1p} \approx r^{-\frac1p}, \\
 \quad u_{{\xi^{(n)}}{\xi^{(n)}}} & \ge \frac{1}{u^{{\xi^{(n)}}{\xi^{(n)}}}},
 \end{split}}
$$
\eqref{u--nn} implies
\begin{equation}\label{u--nn-1}
u^{{\xi^{(n)}}{\xi^{(n)}}}=\frac{1}{u_{{\xi^{(n)}}{\xi^{(n)}}} }\big(1+O(r^{\sigma_p})\big).
\end{equation}

As a result, by \eqref{v-g-1}, \eqref{v-g-2}, \eqref{u-nn} and \eqref{u--nn-1}, we have
\begin{equation}\label{ggrr}
\begin{split}
g g_{{\xi^{(n)}}{\xi^{(n)}}}
&=g\left(\frac{1+\sigma_p}{\sigma_p}v\right)^{-\frac{1}{1+\sigma_p}} v_{{\xi^{(n)}}{\xi^{(n)}}}-\frac{1}{\sigma_p}g_{{\xi^{(n)}}}^2\\
 &= \frac 1{\sigma_p} \left(\frac{1+\sigma_p}{\sigma_p} v\right)^{-\frac{2}{1+\sigma_p}} \Big(\left(\sigma_p + 1\right)(ru_r-u)u^{{\xi^{(n)}}{\xi^{(n)}}}- r^2\Big)\\
  &= \frac{r^2}{\sigma_p} \left(\frac{1+\sigma_p}{\sigma_p} v\right)^{-\frac{2}{1+\sigma_p}} \left(\left(\sigma_p + 1\right)\frac{(ru_r-u)}{r^2u_{{\xi^{(n)}}{\xi^{(n)}}}}(1+O(r^{\sigma_p}))- 1\right)\\
       &= \frac{r^2}{\sigma_p} \left(\frac{1+\sigma_p}{\sigma_p} v\right)^{-\frac{2}{1+\sigma_p}} \left(\frac{ \left(\sigma_p + 1\right) \int_0^r\rho u_{\rho\rho}d\rho}{r^{\sigma_p+1}(a_0+O(r^{\frac{\alpha_0 \sigma_p}{2}}))}(1+O(r^{\sigma_p}))- 1\right)\\
      &\approx  \left( \frac{(\sigma_p +1)\int_0^r\rho u_{\rho\rho}d\rho}{a_0 r^{\sigma_p+1} }\Big(1+O(r^{\frac{\alpha_0 \sigma_p}{2}})\Big) -1 \right) \lesssim   r^{\frac{\alpha_0 \sigma_p}{2}},
\end{split}
\end{equation}
and for $j=1,\cdots, n-1$
\begin{equation}\label{ggrN}
\begin{split}
\left|\sqrt{g}   g_{{\bar\xi}^{(j)}{\xi^{(n)}}}\right|&=\sqrt{g} \left(\frac{1+\sigma_p}{\sigma_p}v\right)^{-\frac{1}{1+\sigma_p}} \left|v_{{\bar\xi}^{(j)}{\xi^{(n)}}}\right| \lesssim r^{\frac{\sigma_p}{2}-1}\left|u^{{\bar\xi}^{(j)}{\xi^{(n)}}} \right| \\
    & \lesssim r^{\frac{\sigma_p}{2}-1} \frac{\left|U^{{\bar\xi}^{(j)}{ \xi^{(n)}}}\right|}{\det D^2 u} \lesssim {\Small\text{$\sum_{i=1}^{n-1}$}} r^{\frac{\sigma_p}{2}-1} \frac{|u_{{\bar\xi}^{(i)}{\xi^{(n)}}}|r^{-(n-2)}}{r^{-1/p}}\\
&\lesssim r^{\frac{\sigma_p}{2}}.
\end{split}
\end{equation}

According to estimates \eqref{h-dydt} and \eqref{HI}, by scaling, we can apply interior estimates for  uniformly parabolic equations \cite{L1996} to get
\begin{equation}\label{htn}
|h_{t,{n+1}}(y',y_{n+1},t)|\lesssim y_{n+1}^{-1}, \quad \forall~(y',y_{n+1},t)\in Q^*_\eta(\bar q_0), \ \ \bar q_0=(\bar y_0',0,\bar t_0).
\end{equation}

By the relationship between $D^2h$ and $D^2g$ in \eqref{g-2tn},  the refined estimates \eqref{ggrr} and \eqref{ggrN} give that
\begin{equation}\label{hnn}
{\begin{split}
      y_{n+1}h_{n+1,n+1} & \lesssim y_{n+1}^{\frac{\alpha_0 }{2}}, \\
   |h_{i,n+1}| & \lesssim 1, \ \ i=1,\cdots,n-1,
   \end{split}} \ \  \text{in} \ \ Q^*_\eta(\bar q_0).
\end{equation}

We claim that \eqref{htn} and \eqref{hnn} imply $h_{n+1}\in C^{\beta}_\mu (\overline{Q^*_\eta(\bar q_0)})$ for some $\beta\in (0,\alpha_0/8)$.  In fact, for all $(y',y_{n+1},t),(\tilde y',\tilde y_{n+1},\tilde t) \in Q^*_\eta(\bar q_0)$, one has
\begin{equation*}
\begin{split}
|h_{n+1}(y',y_{n+1},t)-h_{n+1}(y',\tilde y_{n+1},t)| &\le \left|\int_{\tilde y_{n+1}}^{y_{n+1}}h_{n+1,n+1}(y',\lambda,t)d\lambda\right| \\
&\lesssim |\tilde y_{n+1}-y_{n+1}|^{\frac{\alpha_0}{2}} \lesssim |\sqrt{\tilde y_{n+1}}- \sqrt{y_{n+1}}|^{\frac{\alpha_0}{2}}
\end{split}
\end{equation*}
and
\begin{equation*}
|h_{n+1}(y',y_{n+1},t)-h_{n+1}(\tilde y', y_{n+1},t)|\lesssim |\tilde y'-y'|.
\end{equation*}
Also for $|t-\tilde t|\le y_{n+1}^2$,
\begin{equation*}
|h_{{n+1}}(y',y_{n+1},t)-h_{{n+1}}(y',y_{n+1},\tilde t)|\lesssim |t-\tilde t|y_{n+1}^{-1}\le |t-\tilde t|^{\frac 12};
\end{equation*}
for $|t-\tilde t|\ge y_{n+1}^2$,
\begin{equation*}
\begin{split}
 \ |h_{{n+1}}(y',y_{n+1},t)-h_{{n+1}}(y',y_{n+1},\tilde t)|
& \le |h_{{n+1}}(y',|t-\tilde t|^{\frac 14}, t)-h_{{n+1}}(y',y_{n+1}, t)|\\
 &\hskip10pt +|h_{{n+1}}(y',|t-\tilde t|^{\frac 14},t)-h_{{n+1}}(y',|t-\tilde t|^{\frac 14},\tilde t)|\\
&\hskip10pt  + |h_{{n+1}}(y',|t-\tilde t|^{\frac 14},\tilde t)-h_{{n+1}}(y',y_{n+1},\tilde t)| \\
& \lesssim \big||t-\tilde t|^{\frac 14}-y_{n+1}\big|^{\frac{\alpha_0}2} + |t-\tilde t|^{\frac 34}\\
& \lesssim  |t-\tilde t|^{\frac {\alpha_0}8}.
\end{split}
\end{equation*}
The above estimates conclude that $h_{n+1}\in C^{\beta}_\mu (\overline{Q^*_\eta(\bar q_0)})$.

Moreover, the estimate \eqref{hnn} also gives that
\begin{equation*}
\begin{split}
\hat b (y',0,t)=&-\bigg(\frac{[(n+2)p-1] h_{n+1}}{h_{n+1}^2+y_{n+1}^{\frac{2}{\sigma_p}}(1+|D_{y'}h|^2)}+ \frac{p}{\sigma_p}\widetilde H^{n+1,n+1}\bigg)\bigg|_{y_{n+1}=0}\\
=& -\frac{(n+1)p-1}{h_{n+1}} \gtrsim 1,
\end{split}
\end{equation*}
which yields that the coefficient of $\p_{y_{n+1}}$ in $\mathcal L$
\begin{equation}\label{hat-b-1}
\hat b (y',y_{n+1},t) \gtrsim 1, \quad \forall ~(y',y_{n+1},t) \in Q^*_\eta(\bar q_0).
\end{equation}
By \eqref{h-dydt}, \eqref{HI} and \eqref{hnn}, there holds $$ \hat b/(p\widetilde H^{n+1,n+1}) \gtrsim 1 \quad \text{in}~ Q^*_\eta(\bar q_0).$$
Hence, by Lemma \ref{Lemma-2.3}, we obtain
$h_1,\cdots, h_{n-1}, h_t\in C^{\beta}_\mu (\overline{Q^*_{\frac\eta2}(\bar q_0)})$ for some  $\beta\in(0,1)$.

Consequently,
by the proof of Lemma 4.4  in \cite{HWZ},
or the argument in Section 6 of \cite{DL2004},
we obtain $h\in C^{2+\beta}_\mu (\overline{Q^*_{\frac\eta2}(\bar q_0)})$.
Therefore  the coefficients of the operator $\mathcal L$ belong to $C^{\beta}_\mu (\overline{Q^*_{\frac\eta2}(\bar q_0)})$,
for a small positive constant $\eta$ depending only on $\mathcal M_0, n,p,T$.
By \eqref{gtgx-est}, 
\begin{equation}\label{g-decay}
	g(y,t) \approx \text{dist}(y,\p\Gamma_t)
\end{equation}
near the interface $\Gamma_t$ for $t\in(0,T]$.
Hence $g$ is $C^{2+\beta}_\mu$-smooth up to the interface $\Gamma_{\bar t_0}$, and 
the desired a priori estimate \eqref{g-c2} follows.   
\end{proof}

{\bf \em Proof of Theorem \ref{thm-g}:}

We still consider equation \eqref{h-eq2} in $Q^*_{\eta}(\bar q_0)$ with $\bar q_0=(\bar y_0',0,\bar t_0)$, where $\bar y_0 =(\bar y_0',\bar y_{0,n}) \in \Gamma_{\bar t_0}$, $t_0\in(0,T^*)$.
Differentiating the equation with respect to $t$ gives
\begin{equation*}
\mathcal L (h_t) = 0,
\end{equation*}
where $\mathcal L $ is the linearized operator in \eqref{lin-7}.
Since the coefficients of the operator $\mathcal L$ all belong to $C^{\beta}_{\mu}(\overline{Q^*_{\frac\eta2}(\bar q_0)})$
and $\hat b \gtrsim 1$ in $Q^*_{\frac\eta2}(\bar q_0)$, by Lemma \ref{Lemma-2.4},
one gets $h_t \in C_\mu^{2+\beta}(\overline{Q^*_{\frac\eta2}(\bar q_0)})$.
Similarly, differentiating equation \eqref{h-eq2} in $y_i$, $i=1, \cdots, n-1$,
we have $h_{y_i} \in C_\mu^{2+\beta} (\overline{Q^*_{\frac\eta2}(\bar q_0)})$.
It follows by the Schauder estimate that $D^k_{t,y'} h \in C_\mu^{2+\beta} (\overline{Q^*_{\frac\eta2}(\bar q_0)})$ for each $k \in \mathbb N^+$, after differentiating equation \eqref{h-eq2} with respect to $t, y_i$ up to $k$ times.

As for the regularity of $h$ in $y_{n+1}$, we need to take care of the term $y_{n+1}^{\frac{2}{\sigma_p}}$
in equation \eqref{h-eq2}, which is not smooth if $\frac{2}{\sigma_p}$ is not an integer.

{\bf Case 1:} \
$\frac{2}{\sigma_p}\in \mathbb N^+$.
Then $y_{n+1}^{\frac{2}{\sigma_p}}$ is smooth.
In this case, one can differentiate equation \eqref{h-eq2} in $y_{n+1}$ to obtain higher regularity as above.

{\bf Case 2a:} \
$\frac{2}{\sigma_p}\not\in \mathbb Z^+$ and $\frac{2}{\sigma_p} < 1$.
Let $$z_1= y_1, \ \cdots, \ z_{n-1} = y_{n-1}, \ z_n = 2 \sqrt{y_{n+1}} ,$$
then $h(z,t), h_{z_i}(z,t),h_t(z,t) \in C^{2+\beta}(\overline{Q_\eta(\bar q_0)})$, $i=1,\cdots,n-1$.
Here $Q_\eta(\bar q_0)$, $\bar q_0 = (\bar y'_0, 0, \bar t_0)$, is the cylinder given by
\begin{equation}
Q_\eta(\bar q_0)=\{(z,t)\in \mathbb R^{n,+} \times \mathbb R \ |\ z_n >  0, |z- (\bar y'_0, 0)| < \eta, \bar t_0-\eta^2 < t \le \bar t_0\} .
\end{equation}

Hence, one can rewrite equation \eqref{h-eq2} in coordinates $(z,t)$ as
\begin{equation}\label{hnn-f}
h_{z_nz_n} -  \frac{\sigma_p+2}{\sigma_p}\frac{h_{z_n}}{z_n}
      = \tilde f(z_n^{\frac{4}{\sigma_p}}, h_t, D_{z'}h, \frac{h_{z_n}}{z_n}, D_{z'}D_zh),
\end{equation}
where $\tilde f$ is $C^{1+\beta}$ smooth in its all arguments.
Moreover, by \eqref{hat-b-1},
$$
- \frac{\p \tilde f}{\p (\frac{h_{z_n}}{z_n})} - \frac{\sigma_p+2}{\sigma_p} \gtrsim 1, \quad \text{in} \ \ \overline{Q_\eta(\bar q_0)}.
$$
To prove the regularity of $h_{z_nz_n}$ in $z_n$, we take a fixed point $(z_0,t_0)=(z'_0, 0, t_0) \in \overline{Q_\eta(\bar q_0)}$, and denote
$$\varrho(z_n) = (\varrho_1(z_n),  \varrho_2(z_n))= \Big( z_n^{\frac{4}{\sigma_p}}, \ \frac{h_{z_n}}{z_n} \big|_{(z_0',z_n,t_0)} \Big) \quad\text{with} \ \ (z_0',z_n,t_0)\in \overline{Q_\eta(\bar q_0)}$$
and
 $$\bar f (\varrho(z_n)) =\bar f (\varrho_1(z_n),\varrho_2(z_n)) =  \tilde f(z_n^{\frac{4}{\sigma_p}}, h_t, D_{z'}h, \frac{h_{z_n}}{z_n}, D_{z'}D_zh) \big|_{(z_0',z_n,t_0)}.$$
We also define two constants:
$$\kappa_0 = \frac{h_{z_n}}{z_n} {\big|}_{(z_0,t_0)}, \quad b_0 = - \frac{\p \bar f}{\p \varrho_2} \big|_{\varrho=(0,\kappa_0)} - \frac{\sigma_p+2}{\sigma_p} >0,$$
which are well-defined as $h(z,t) \in C^{2+\beta}(\overline{Q_\eta(\bar q_0)})$.
Then, equation \eqref{hnn-f} can be regarded
as an ODE of the variable $z_n$ and  rewritten as
\begin{equation}\label{hzz-hf}
h_{z_nz_n} \big|_{(z_0',z_n,t_0)}  + b_0  \frac{h_{z_n}}{z_n} \big|_{(z_0',z_n,t_0)}  = \bar f (\varrho(z_n)) - \bar f_{\varrho_2} (0, \kappa_0) \varrho_2(z_n) =: \check f(\varrho(z_n)),
\end{equation}
which yields that
\begin{equation*}
h_{z_n}\big|_{(z_0',z_n,t_0)}  = z_n^{-b_0} \int_{0}^{z_n} \rho^{b_0} \check f(\varrho(\rho)) d \rho
\end{equation*}
and
\begin{equation*}
\frac{h_{z_n}}{z_n}\big|_{(z_0',\lambda,t_0)}  = \lambda^{-b_0-1} \int_{0}^{\lambda} \rho^{b_0} \check f(\varrho(\rho)) d \rho = \int_{0}^1 \rho^{b_0} \check f(\varrho(\lambda\rho)) d \rho.
\end{equation*}
Note that $\p_{\varrho_2}\check f(\varrho(0))=0$.
Then, for $(z_0', \lambda,t_0),(z_0',\tilde\lambda,t_0) \in \overline{Q_\eta(\bar q_0)}$, we get
\begin{equation}
\begin{split}
& \quad \ \Big|\frac{h_{z_n}}{z_n}\big|_{(z_0',\lambda,t_0)} - \frac{h_{z_n}}{z_n}\big|_{(z_0',\tilde\lambda,t_0)} \Big| \le  \int_{0}^1 \rho^{b_0} | \check f(\varrho(\lambda\rho)) - \check f(\varrho(\tilde\lambda\rho)) | d \rho \\
& \le \int_{0}^1   | \check f(\varrho_1(\lambda\rho), \varrho_2(\lambda\rho))  - \check f(\varrho_1(\tilde\lambda\rho), \varrho_2(\lambda\rho))| d \rho \\
&\quad + \int_{0}^1 \rho^{b_0} | \check f(\varrho_1(\tilde\lambda\rho), \varrho_2(\lambda\rho)) - \check f(\varrho_1(\tilde\lambda\rho), \varrho_2(\tilde\lambda\rho)) | d \rho \\
& \le \|D \tilde f\|_{L^{\infty}} \cdot \big| \lambda^{\frac{4}{\sigma_p}} - \tilde \lambda^{\frac{4}{\sigma_p}} \big| + o_{\eta}(1) \|\tilde f\|_{C^{1,\beta}} \cdot \sup_{\rho\in[0,1]} \Big|\frac{h_{z_n}}{z_n}\big|_{(z_0',\lambda\rho,t_0)} - \frac{h_{z_n}}{z_n}\big|_{(z_0',\tilde\lambda\rho,t_0)} \Big|,
\end{split}
\end{equation}
where $o_\eta(1) \le  O\big(\eta^{\beta\min\{1,\frac{4}{\sigma_p}\}} \big) \to 0$ as $\eta \to 0$.
Therefore, for $\eta>0$ small, one gets
\begin{equation}
\begin{split}
\Big|\frac{h_{z_n}}{z_n}\big|_{(z_0',\lambda,t_0)} - \frac{h_{z_n}}{z_n}\big|_{(z_0',\tilde\lambda,t_0)} \Big| \lesssim  |\lambda - \tilde\lambda|^{\min\{1,\frac{4}{\sigma_p}\}}.
\end{split}
\end{equation}
This implies $\frac{h_{z_n}}{z_n}$ is $C^{0,\min\{1,\frac{4}{\sigma_p}\}}$-smooth with respect to $z_n$, so is $h_{z_nz_n}$ from \eqref{hnn-f}.
Recall that $y_{n+1}=\frac {1}4 z_n^2$,
we obtain $h(y',y_{n+1},t)\in C_\mu^{2+\min\{1,\frac{4}{\sigma_p}\}}(\overline{Q^*_{\frac\eta4}(\bar q_0)})$.

{\bf Case 2b:} \
$\frac{2}{\sigma_p}\not\in \mathbb Z^+$ and $\frac{2}{\sigma_p}>1$.
Differentiating equation \eqref{h-eq2} in  $y_{n+1}$ up to $k_0 = \Big[\frac{2}{\sigma_p} \Big]$ times, one gets
$$V := D^{k_0}_{y_{n+1}} h(y',y_{n+1},t) \in C_\mu^{2+\beta}\overline{(Q^*_{\frac\eta2}(\bar q_0)})$$
by  Lemma \ref{Lemma-2.4}.
Similarly,   $V_t, V_{y_i} \in C_\mu^{2+\beta} \overline{(Q^*_{\frac\eta2}(\bar q_0)})$, $i=1,\cdots,n-1$.

Let
$$z_1= y_1, \ \cdots, \ z_{n-1} = y_{n-1},\  z_n = 2 \sqrt{y_{n+1}}, $$
then $V, V_{z_i},V_t \in C^{2+\beta}\overline{(Q_{\eta}(\bar q_0)})$ as a function in $(z,t)$ for $i=1,\cdots,n-1$.
Consider the equation for $V$ in coordinates $(z,t)$ as
\begin{equation}\label{Vnn-f}
V_{z_nz_n} -  \frac{\sigma_p+2}{\sigma_p} \frac{V_{z_n}}{z_n} = \hat f(z_n^{\frac{4}{\sigma_p}-2k_0}, V_t, D_{z'}V, \frac{V_{z_n}}{z_n}, D_{z'}D_zV),
\end{equation}
where $\hat f$ is a $C^{1+\beta}$ smooth function of all its arguments.
Hence, one obtains $V(y',y_{n+1},t)\in C_\mu^{2+\min\{1,\frac4{\sigma_p}-2k_0\} }(\overline{Q^*_{\frac\eta4}(\bar q_0)})$
by the same argument as in Case 2a.

From the arbitrariness of $\bar q_0$, we obtain Theorem \ref{thm-g}.
\qed

\vskip10pt

{\bf \em Proof of Corollary \ref{cor-v}:}

Fix a point $\bar p_0=(\bar y_0, \bar t_0)$ on the interface $\Gamma_{\bar t_0}$, $\bar t_0 \in (0,T]$.
By a rotation of the coordinates,
we may assume that the unit outer normal of the flat part $\{v(y,\bar t_0) = 0\}$ at $\bar p_0$ is $e_n=(0, \cdots, 0, 1)$.

If $\frac{1}{\sigma_p} \in \mathbb Z^+$,
by Theorem \ref{thm-g},  $g$ is $C^\infty$-smooth up to the interface $\Gamma_t$ for $0<t<T^*$.
Hence $v=\frac{\sigma_p}{\sigma_p+1} g^{1+\frac{1}{\sigma_p}}$ is also $C^\infty$ smooth.

Next we consider the case $\frac{1}{\sigma_p} \notin \mathbb Z^+$:

\vskip8pt

If  $\frac{1}{\sigma_p}\in(0,\frac12]$,
then $\frac{2}{\sigma_p}\in (0,1]$ and $k_0 =0$.
By Theorem \ref{thm-g}, we have $g \in C_\mu^{2+ \beta_0}(\overline{\{v>0\}})$, where $\beta_0 := \min\{1, \frac{4}{\sigma_p} - 2 k_0 \}$.
Hence $g \in C^{0,1}(\overline{\{v>0\}})$ and $D_yg \in C^{0,\frac{1}{\sigma_p}}(\overline{\{v>0\}})$
as $\frac{1}{\sigma_p} \le \frac12 \beta_0$.
Hence,
\begin{equation}\label{v-1n-g}
D_yv = g^{\frac{1}{\sigma_p}} D_yg \in C^{0,\frac{1}{\sigma_p}}(\overline{\{v>0\}}),
\end{equation}
which yields $v \in C^{1,\frac{1}{\sigma_p}}(\overline{\{v>0\}})$.

\vskip8pt

If  $\frac{1}{\sigma_p}\in(\frac12, 1)$, then $\frac{2}{\sigma_p}\in (1,2)$ and $k_0 =1$.
By Theorem \ref{thm-g}, $g \in C_\mu^{1, 2+ \beta_0}(\overline{\{v>0\}})$, and so $g, D_yg \in C^{0,1}(\overline{\{v>0\}})$.
Hence, \eqref{v-1n-g} gives $v \in C^{1,\frac{1}{\sigma_p}}(\overline{\{v>0\}})$.

\vskip8pt

If  $\frac{1}{\sigma_p}>1$. Denote $l_0 := [\frac{1}{\sigma_p}]$,
then $\frac{2}{\sigma_p} > 2l_0 \ge 2$.
In this case, by Theorem \ref{thm-g},  $g$ is at least of class $C_\mu^{2l_0, 2+ \vep}(\overline{\{v>0\}})$
for some small  $\vep>0$.
Hence $g, D_yg, D^2_yg, \cdots, D^{l_0+1}_y g \in C^{0,1}(\overline{\{v>0\}})$ as $l_0 + 1 \le 2l_0$.
Differentiating  $D_yv = g^{\frac{1}{\sigma_p}} D_yg$  $l_0$ times in space variables $y$,
we get $D^{l_0+1}_{y} v \in C^{0,\frac{1}{\sigma_p}-l_0}(\overline{\{v>0\}})$.
It follows that $v \in C^{1+l_0, \frac{1}{\sigma_p} - l_0}(\overline{\{v>0\}})$.

\vskip8pt

As for the estimate \eqref{v-inter}, 
we fix $\bar t_0\in[\sigma,T]$ and take $(y, \bar t_0), (\tilde y, \bar t_0) \in \overline{\{0<v(\cdot, \bar t_0)<1\}}$ with 
$d_{y,\tilde y}({\bar t_0}) := \min \{\text{dist}(y,\Gamma_{\bar t_0}), \text{dist}(\tilde y,\Gamma_{\bar t_0})\} = \text{dist}(\tilde y,\Gamma_{\bar t_0}).$

\vskip8pt

If $\frac{2}{\sigma_p} \le 1$, then $k_0  =0$.
By Theorem \ref{thm-g} and \eqref{v-g-2}, it holds
\begin{equation}\label{g-v}
g^{1-\frac{1}{\sigma_p}} v_{ij}= g g_{ij} + \frac{1}{\sigma_p} g_i g_j \in C_\mu^{0,\frac{2}{\sigma_p}}(\overline{\{v>0\}}), \quad 1 \le i,j \le n.
\end{equation}
In the case $|y - \tilde y| \ge  \text{dist}(\tilde y, \Gamma_{\bar t_0})$, \eqref{g-decay} and \eqref{g-v} give that
\begin{equation}\label{d-yy-1}
\begin{split}
d_{y,\tilde y}(\bar t_0)^{1+\frac{1}{\sigma_p}} \frac{ |  v_{ij}(y,\bar t_0) -  v_{ij}(\tilde y, \bar t_0) | }{|y-\tilde y|^{\frac{2}{\sigma_p}}}  \le C \text{dist}(\tilde y, \Gamma_{\bar t_0})^{1 - \frac{1}{\sigma_p}} |  v_{ij}(y,\bar t_0) -  v_{ij}(\tilde y, \bar t_0) | \le C.
\end{split}
\end{equation}
In the case $|y - \tilde y| \le   \text{dist}(\tilde y, \Gamma_{\bar t_0})$, by \eqref{gtgx-est}, \eqref{g-decay} and \eqref{g-v},
we have
\begin{equation}\label{d-yy-2}
\begin{split}
& \quad\ d_{y,\tilde y}({\bar t_0})^{1+\frac{1}{\sigma_p}} \frac{ |  v_{ij}(y,{\bar t_0}) -  v_{ij}(\tilde y, {\bar t_0}) | }{|y-\tilde y|^{\frac{2}{\sigma_p}}} \\
& \le C d_{y,\tilde y}({\bar t_0})^{\frac{2}{\sigma_p}}  \frac{ | g( y,{\bar t_0})^{1-\frac{1}{\sigma_p}} v_{ij}(y,{\bar t_0}) -  g( y,{\bar t_0})^{1-\frac{1}{\sigma_p}} v_{ij}(\tilde y, {\bar t_0}) | }{|y-\tilde y|^{\frac{2}{\sigma_p}}} \\
& \le  C d_{y,\tilde y}({\bar t_0})^{\frac{2}{\sigma_p}} \frac{ |  g(y,{\bar t_0})^{1-\frac{1}{\sigma_p}} v_{ij}(y,{\bar t_0}) -  g(\tilde y,{\bar t_0})^{1-\frac{1}{\sigma_p}} v_{ij}(\tilde y, {\bar t_0}) | }{\mu[(y,\bar t_0),(\tilde y,\bar t_0)]^{\frac{2}{\sigma_p}} \cdot |\sqrt{y_n}+\sqrt{\tilde y_n}|^{\frac{2}{\sigma_p}}} \\
& \quad + C d_{y,\tilde y}({\bar t_0})^{ \frac{2}{\sigma_p}}  \frac{ v_{ij}(\tilde y,{\bar t_0}) |  g(y,{\bar t_0})^{1-\frac{1}{\sigma_p}} -  g(\tilde y,{\bar t_0})^{1-\frac{1}{\sigma_p}} | }{|y-\tilde y|^{\frac{2}{\sigma_p}}} \\
& \le C +  C d_{y,\tilde y}({\bar t_0})^{ \frac{2}{\sigma_p}} v_{ij}(\tilde y,{\bar t_0})\cdot (g_ng^{- \frac{1}{\sigma_p}})\big|_{(\lambda y + (1-\lambda)\tilde y, {\bar t_0})} \cdot |y-\tilde y|^{1- \frac{2}{\sigma_p}} \text{\quad\quad $(\lambda\in[0,1])$}\\
& \le C + C \text{dist}(\tilde y, \Gamma_{\bar t_0})^{ 1- \frac{1}{\sigma_p}} v_{ij}(\tilde y,{\bar t_0}) \le C.
\end{split}
\end{equation}
Hence, \eqref{v-inter} holds when $\frac{2}{\sigma_p} \le 1$.

If $\frac{2}{\sigma_p} >1$, then $k_0 \ge 1$.
By differentiating equation $D_yv = g^{\frac{1}{\sigma_p}} D_y g$ with respect to $y$ $k_0+1$ times and by Theorem \ref{thm-g},
we have
$$g^{k_0+1 - \frac{1}{\sigma_p}} D^{k_0+2}_{y}v \in C_\mu^{0,\frac{2}{\sigma_p}-k_0}(\overline{\{v>0\}}).$$
Therefore, by similar computations as in \eqref{d-yy-1} and \eqref{d-yy-2}, it follows that
\begin{equation*}
d_{y,\tilde y}(\bar t_0)^{1+\frac{1}{\sigma_p}} \frac{ |  D^{k_0+2}_{y}v(y,\bar t_0) -  D^{k_0+2}_{y}v(\tilde y, \bar t_0) | }{|y-\tilde y|^{\frac{2}{\sigma_p} - k_0}} \le C.
\end{equation*}
As a result, \eqref{v-inter} follows.
\qed

\end{document}